\documentclass[12pt]{amsart}
\usepackage[top=1.6cm,bottom=2cm,left=1.1in,right=1.1in]{geometry}
\usepackage[T1]{fontenc}
\usepackage[utf8]{inputenc}
\usepackage{lmodern}
\usepackage{amsmath,amssymb,amsthm,mathtools}
\usepackage{mathrsfs}
\usepackage{enumitem}
\usepackage{microtype}
\usepackage[colorlinks=true,linkcolor=blue,citecolor=blue,urlcolor=blue]{hyperref}

\numberwithin{equation}{section}

\newtheorem{theorem}{Theorem}[section]
\newtheorem{lemma}[theorem]{Lemma}
\newtheorem{corollary}[theorem]{Corollary}
\newtheorem{proposition}[theorem]{Proposition}
\newtheorem{conjecture}[theorem]{Conjecture}
\theoremstyle{definition}
\newtheorem{definition}[theorem]{Definition}
\theoremstyle{remark}
\newtheorem{remark}[theorem]{Remark}

\newcommand{\ii}{\sqrt{-1}}

\newcommand{\BC}{\mathrm{BC}}
\newcommand{\A}{\mathrm{A}}
\newcommand{\MK}{\mathcal{MK}}
\newcommand{\MN}{\mathcal{MN}}
\newcommand{\G}{\mathcal{G}}
\newcommand{\B}{\mathcal{B}}
\newcommand{\E}{\mathcal{E}}
\newcommand{\EA}{\mathcal{E}_{\A}}
\newcommand{\Rea}{\operatorname{Re}}
\newcommand{\Ima}{\operatorname{Im}}
\newcommand{\arccot}{\operatorname{arccot}}

\title[A numerical criterion for the 2-Hessian equation]{A Numerical Criterion for the 2-Hessian Equation on Compact K\"ahler Manifolds}

\author{Jixiang Fu}
\address{Shanghai Center for Mathematical Sciences,
	Fudan University,
	Shanghai 200433, China}
\email{majxfu@fudan.edu.cn}

\author{Dekai Zhang}
\address{School of Mathematical Sciences, Key Laboratory of Mathematics and Engineering Applications (Ministry of Education), Shanghai Key Laboratory of PMMP, East China Normal University, Shanghai 200241, China. 
}
\email{dkzhang@math.ecnu.edu.cn}
\author{Ziyi Zhang}
\address{
School of Mathematical Sciences, Fudan University, Shanghai 200433, China}
\email{21210180101@m.fudan.edu.cn}

\date{}

\begin{document}
	
	\begin{abstract}
		We show that a Nakai--Moishezon-type criterion associated with the complex $2$-Hessian equation produces a Gauduchon class. In complex dimension three, this numerical criterion is equivalent to the existence of a smooth $2$-admissible representative and hence to the solvability of the $2$-Hessian equation. As consequences of these results, we prove the corresponding conjectures of Murakami for the complex Hessian equation and of Sz\'ekelyhidi for the Hessian quotient equation in dimension three. We also establish a boundary version of the above results.
	\end{abstract}
	
	\maketitle
	
	\section{Introduction}
	
	Let $(M,\omega)$ be an $n$-dimensional compact K\"ahler manifold, and let $\chi$ be a real closed $(1,1)$-form. We denote by $\Gamma^k_\omega$ the cone of smooth real $(1,1)$-forms $\alpha$ satisfying
	$$
	\alpha^j\wedge\omega^{n-j}>0, 1\leq j\leq k.
	$$ If $\alpha\in \Gamma_{\omega}^k$, we say $\alpha$ is a $k$-admissible form.
	For a smooth function $u$, write $\chi_u=\chi +\ii\partial\bar\partial u$. Consider the following equations.
	\begin{enumerate}[label=(\roman*),leftmargin=2.4em]
		\item The complex $k$-Hessian equation, for $2\leq k\leq n$,
		\begin{equation}\label{eq:complex-hessian}
			\chi_u^k\wedge\omega^{n-k}=f\omega^n,\qquad \chi_u\in\Gamma^k_\omega,
		\end{equation}
		where $f$ is a smooth positive function satisfying $\int_{M}f\omega^n=\int_{M}\chi^k\wedge \omega^{n-k}$.
		\item The complex $(k,l)$-Hessian quotient equation, for $1\leq l<k\leq n$,
		\begin{equation}\label{eq:hessian-quotient}
			\chi_u^k\wedge\omega^{n-k}=c\chi_u^l\wedge\omega^{n-l},\qquad \chi_u\in\Gamma^k_\omega,
		\end{equation}
		where $c$ is a positive constant satisfying ${\int_{M}\chi^{k}\wedge\omega^{n-k}}=c{\int_{M}\chi^{l}\wedge\omega^{n-l}}.$ 
	\end{enumerate}
    
	The case $k=n$ of equation~\eqref{eq:complex-hessian} is the complex
	Monge--Amp\`ere equation solved by Yau~\cite{Yau}; see
	Tosatti--Weinkove~\cite{TosattiWeinkove} for the Hermitian case.
	When $\chi=\omega$, Hou--Ma--Wu~\cite{HouMaWu} established the
	second-order estimate, and Dinew--Ko\l odziej~
	\cite{DinewKolodziej} subsequently obtained the gradient estimate,
	thereby completing the proof of smooth solvability on compact
	K\"ahler manifolds. The case of a general closed $(1,1)$-form $\chi\in \Gamma^{k}_{\omega}$ for the $k$-Hessian equation \eqref{eq:complex-hessian}  was  solved by Sz\'ekelyhidi \cite{Szekelyhidi}. 
    
    When \(k = n\), \(l = n-1\), equation~\eqref{eq:hessian-quotient} is the \(J\)-equation introduced by Donaldson~\cite{Donaldson1999} and Chen~\cite{Chen2000}, and it  was solved by Song--Weinkove \cite{SongWeinkove} under the  subsolution condition; the case \(k = n, 1 \leq l \leq n-1\) was solved by Fang--Lai--Ma  \cite{FangLaiMa}. The general case was subsequently solved by  Sz\'{e}kelyhidi \cite{Szekelyhidi}.
	
	\begin{theorem}[Sz\'ekelyhidi~\cite{Szekelyhidi}]\label{thm:szekelyhidi-subsolution}
		Let $(M,\omega)$ be a compact K\"ahler manifold of dimension $n$. Assume  $\chi\in\Gamma^k_\omega$ is closed and satisfies
		\begin{equation}\label{eq:subsolutioncondition}
		k\chi^{k-1}\wedge\omega^{n-k}-cl\chi^{l-1}\wedge\omega^{n-l}>0.
		\end{equation} Then there exists a smooth function $u$ solving  equation~\eqref{eq:hessian-quotient}.
	\end{theorem}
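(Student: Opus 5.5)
We would prove Theorem~\ref{thm:szekelyhidi-subsolution} by the continuity method together with a priori estimates whose key input is the subsolution condition~\eqref{eq:subsolutioncondition}. Write the equation in terms of the eigenvalues $\lambda$ of $\chi_u$ with respect to $\omega$:
$$
F(\lambda)=\frac{\sigma_k(\lambda)}{\sigma_l(\lambda)}=c\binom{n}{k}\binom{n}{l}^{-1},\qquad \lambda\in\Gamma_k .
$$
Equivalently, take $-\sigma_l/\sigma_k$, or $\log$ of the quotient, to get a concave operator. The function $F^{1/(k-l)}$ is concave and elliptic on $\Gamma_k$. The interpretation of~\eqref{eq:subsolutioncondition} we need is the following. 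At each point $x$, the set $\{\mu\in\Gamma_k:\ \mu\ge\lambda(\chi)\text{ componentwise},\ F(\mu)\le \text{const}\}$ is bounded. Concretely, letting any single eigenvalue tend to $+\infty$ while the others are fixed at $\lambda(\chi)$ gives a limiting value of $F$ that is strictly larger than the constant. Indeed, $\lim_{t\to\infty}F(\lambda+te_i)=\sigma_{k-1}(\lambda')/\sigma_{l-1}(\lambda')$, and after unwinding the wedge-product expressions the positivity of $k\chi^{k-1}\wedge\omega^{n-k}-cl\chi^{l-1}\wedge\omega^{n-l}$ is exactly this statement. This makes $\underline u=0$ a $\mathcal C$-subsolution in Sz\'ekelyhidi's sense.

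\textbf{Continuity path.} For $t\in[0,1]$ we consider $F(\chi_{u_t})=c_t$ with right-hand side $(1-t)F(\chi)+tc$, adjusted by an additive constant $b_t$ (solving for the pair $(u_t,b_t)$). This keeps $u=0$ a solution at $t=0$. We must check that $\chi$ remains a $\mathcal C$-subsolution along the path. Since the cone condition is pointwise, it suffices to verify that the level sets of the right-hand side stay below the limiting values; the intermediate constants lie between $F(\chi)$ and $c$, which we will try to arrange by choosing the path as a convex combination. Openness follows from the implicit function theorem, because the linearization is elliptic with cokernel the constants, which is absorbed by $b_t$. Closedness reduces to uniform $C^{2,\alpha}$ estimates, by Evans--Krylov applied to the concave operator once $C^2$ is known.

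\textbf{Estimates.} The estimates proceed in the following order.
\begin{enumerate}
\item The $C^0$ bound, via an Alexandrov--Bakelman--Pucci argument at the minimum point in local coordinates. The subsolution property forces the contact set to have small measure, in the manner of B\l ocki and Sz\'ekelyhidi.
\item The second-order estimate $|\partial\bar\partial u|\le C(1+\sup|\nabla u|^2)$. Apply the maximum principle to $\log\lambda_1+\varphi(|\nabla u|^2)+\psi(u)$. The key dichotomy from the subsolution is that either $\sum F^{i\bar i}(\underline\lambda_i-\lambda_i)\ge \kappa\sum F^{i\bar i}$, or $F^{i\bar i}\ge\kappa\sum F^{j\bar j}$ for all $i$.
\item The gradient bound, by a blow-up argument: a Liouville theorem for $\Gamma$-solutions on $\mathbb C^n$ (Dinew--Ko\l odziej) shows that the gradient cannot blow up.
\end{enumerate}

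The main obstacle is the second-order estimate in step~(2). Third-order terms arising from the non-smooth largest eigenvalue $\lambda_1$ must be controlled using concavity, with the subsolution inequality supplying the crucial good term in each case of the dichotomy. We would follow Sz\'ekelyhidi's handling: perturb the eigenvalues to make $\lambda_1$ smooth, and use the term $-F^{p\bar q,r\bar s}$ to absorb the bad terms.
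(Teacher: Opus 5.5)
The paper gives no proof of this statement; it quotes it from Sz\'ekelyhidi. Your outline follows the architecture of that work: $\mathcal C$-subsolution, an ABP-type $C^0$ bound, the $\log\lambda_1$ second-order estimate with the two-case dichotomy, blow-up plus Liouville for the gradient, and Evans--Krylov. Your reading of the cone condition is also correct. In a frame diagonalizing $\chi$, the hypothesis says exactly $\Lambda(x):=\min_i\sigma_{k-1}(\lambda'_i)/\sigma_{l-1}(\lambda'_i)>\hat c:=c\binom{n}{k}\binom{n}{l}^{-1}$, where $\lambda'_i$ is $\lambda(\chi(x))$ with the $i$-th entry removed. So $0$ is a $\mathcal C$-subsolution of the target equation.

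\textbf{The gap: the continuity path.} You flag this step but do not resolve it, and the convex combination does not resolve it. With right-hand side $h_t=(1-t)F(\chi)+t\hat c+b_t$, the constant $b_t$ is forced by the equation, not chosen. Since $F(\chi_u)\,\chi_u^l\wedge\omega^{n-l}=\binom{n}{k}\binom{n}{l}^{-1}\chi_u^k\wedge\omega^{n-k}$, integration gives
\[
b_t=(1-t)\Bigl(\hat c-\int_M F(\chi)\,d\mu_t\Bigr),\qquad \mu_t=\frac{\chi_{u_t}^l\wedge\omega^{n-l}}{\int_M\chi^l\wedge\omega^{n-l}},
\]
where $\mu_t$ is an unknown probability measure. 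So $b_t$ has no sign. The available bounds are $b_t\le(1-t)(\hat c-\min F(\chi))$ from this identity and $b_t\le t(\max F(\chi)-\hat c)$ from the maximum principle. Neither keeps $h_t$ below $\Lambda$ at points where $\Lambda-\max(F(\chi),\hat c)$ is small compared with $\operatorname{osc}_M F(\chi)$, and the hypothesis does not exclude such points. Hence the right-hand side need not stay between $F(\chi)$ and $\hat c$. You then have no uniform $\mathcal C$-subsolution constants along the path, which is exactly what the $C^0$ and $C^2$ estimates, and therefore closedness, require.

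\textbf{A fix.} You need a path whose right-hand side stays uniformly below $\Lambda$ for every possible value of the constant. For instance, let $\epsilon_0=\min_M(\Lambda-\max(F(\chi),\hat c))>0$, and let $\Lambda_\ast$ be smooth with $|\Lambda_\ast-\Lambda|<\epsilon_0/8$. Solve for $(u_t,b_t)$ in
\[
F(\chi_{u_t})=\rho\bigl(x,(1-t)F(\chi)+t\hat c+b_t\bigr).
\]
Here $\rho$ is smooth with $0<\partial_s\rho\le1$, $\rho(x,s)=s$ for $s\le\Lambda_\ast(x)-\epsilon_0/2$, and $\rho<\Lambda_\ast-\epsilon_0/4$ everywhere. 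This path has the needed properties:
\begin{enumerate}
\item $0$ is a uniform $\mathcal C$-subsolution for all $t$ and all $b$.
\item $(0,0)$ solves the equation at $t=0$.
\item At $t=1$, the integral identity and strict monotonicity in $s$ force $b_1=0$, so you recover $F(\chi_u)=\hat c$.
\item The maximum principle at the extrema of $u_t$ gives $t(\min F(\chi)-\hat c)\le b_t\le t(\max F(\chi)-\hat c)$ and $h_t\ge\min F(\chi)>0$.
\item Openness holds because $\partial_s\rho>0$ is transverse to the range of the linearized operator.
\end{enumerate}
With this modification the rest of your outline goes through.

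\textbf{A minor correction.} In the $C^0$ step, the subsolution bounds $\det D^2v$ from above on the contact set, so ABP gives a \emph{lower} bound on its measure. It is this lower bound that contradicts the $L^1$ bound when $\inf u$ is very negative; the subsolution does not force the contact set to be small.
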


Sz\'ekelyhidi conjectured that the subsolution condition~\eqref{eq:subsolutioncondition} in Theorem~\ref{thm:szekelyhidi-subsolution}  is equivalent to numerical inequalities on analytic subvarieties.

\begin{conjecture}[Sz\'ekelyhidi~\cite{Szekelyhidi}]\label{conj:szekelyhidi}
	Let $(M,\omega)$ be a compact K\"ahler manifold of dimension $n$. Assume  $\chi\in\Gamma^k_\omega$ is closed. Then there exists a smooth function $u$ such that $$\chi_u\in\Gamma_{\omega}^k, \qquad k\chi_u^{k-1}\wedge\omega^{n-k}-cl\chi_u^{l-1}\wedge\omega^{n-l}>0$$ if and only if, for any $p$-dimensional irreducible analytic subvariety $V\subset M$ with $n-l\leq p\leq n-1$,
	\[
	\frac{k!}{(p-n+k)!}\int_V\chi^{p-n+k}\wedge\omega^{n-k}
	-\frac{l!}{(p-n+l)!}c\int_V\chi^{p-n+l}\wedge\omega^{n-l}>0.
	\]
\end{conjecture}

The $J$-equation case was solved by Chen \cite{Chen} in the uniform version and finally by Song \cite{Song}; the case $k=n$ and $1\leq l\leq n-1$ was solved by Datar--Pingali~\cite{DatarPingali} on projective manifolds and finally by Fang--Ma
~\cite{FangMa}.
Motivated by Conjecture~\ref{conj:szekelyhidi}, Murakami proposed the following numerical conjecture of the existence of a $k$-admissible representative.

\begin{conjecture}[Murakami~\cite{Murakami}]\label{conj:murakami}
Let $(M,\omega)$ be a compact K\"ahler manifold of dimension $n\geq2$, and let $\chi$ be a real closed $(1,1)$-form. Then $[\chi]$ contains a smooth representative in $\Gamma^k_\omega$ if and only if, for any $t\geq0$ and any $p$-dimensional irreducible analytic subvariety $V\subset M$ with $n-k+1\leq p\leq n$,
\[
\int_V(\chi+t\omega)^{p-n+k}\wedge\omega^{n-k}>0.
\]
\end{conjecture}

For $k=n$, this follows from the numerical characterization of the K\"ahler cone by Demailly--P\u{a}un~\cite{DemaillyPaun}. Murakami~\cite{Murakami} proved the conjecture on the Calabi-symmetric manifolds and for semiample classes. In this paper, we restrict  to the case $k=2$.

Our first main result is a numerical criterion ensuring that the Aeppli class of $\chi\wedge\omega^{n-2}$ lies in the Gauduchon cone.

\begin{theorem}\label{lem:gauduchon-criterion}
Let $(M,\omega)$ be a compact K\"ahler manifold of dimension $n\geq2$, and let $\chi$ be a real closed $(1,1)$-form. Suppose that
\[
\int_M\chi^2\wedge\omega^{n-2}>0,\qquad \int_M\chi\wedge\omega^{n-1}>0,
\]
and that, for any irreducible divisor $D\subset M$,
\[
\int_D\chi\wedge\omega^{n-2}>0.
\]
Then
\(
[\chi\wedge\omega^{n-2}]_{\A}\in H^{n-1,n-1}_{\A}(M,\mathbb{R})
\)
belongs to the Gauduchon cone.
\end{theorem}

The proof combines the duality between the pseudo-effective and Gauduchon cones with Boucksom's divisorial Zariski decomposition and the Hodge index theorem. 

As an application of Theorem~\ref{lem:gauduchon-criterion}, based on Chen's  theorem~\cite{Chen} for the twisted supercritical  deformed Hermitian--Yang--Mills equation, we can solve the $2$-Hessian equation in dimension three under a Nakai--Moishezon-type numerical condition.

\begin{theorem}\label{thm:main-threefold}
Let $(M,\omega)$ be a compact K\"ahler threefold, and let $\chi$ be a real closed $(1,1)$-form satisfying 
\begin{equation}\label{eq:normalization}
\int_M\chi^2\wedge\omega>0.
\end{equation}
The following three statements are equivalent.
\begin{enumerate}[label=(\arabic*),leftmargin=2.4em]
\item $
\int_M\chi\wedge\omega^2>0
$
and, for any irreducible analytic surface $Y\subset M$,
\[
\int_Y\chi\wedge\omega>0.
\]
\item There exists a smooth $(1,2)$-form $\psi$ such that
\[
\chi\wedge\omega+\partial\psi+\bar\partial\bar\psi>0
\]
as a smooth $(2,2)$-form.
\item For any smooth positive function $f$ satisfying
$
\int_M\chi^2\wedge\omega=\int_M f\omega^3,
$
there exists a smooth function $u$ solving
\[
\chi_u^2\wedge\omega=f\omega^3,\quad \chi_u\in\Gamma^2_\omega.
\]
\end{enumerate}
\end{theorem}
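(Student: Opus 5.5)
I will prove the cycle of implications (1)$\Rightarrow$(2)$\Rightarrow$(3)$\Rightarrow$(1).

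\emph{(1)$\Rightarrow$(2).} Statement (1), together with the normalization \eqref{eq:normalization}, is exactly the hypothesis of Theorem~\ref{lem:gauduchon-criterion} with $n=3$. In a threefold the divisors are precisely the irreducible surfaces $Y$. Hence $[\chi\wedge\omega]_{\A}$ lies in the Gauduchon cone. This means some representative of the Aeppli class is a strictly positive $(2,2)$-form. Such a representative differs from $\chi\wedge\omega$ by $\partial\bar\psi'+\bar\partial\psi''$. Since the class is real, we can symmetrize and write the difference as $\partial\psi+\bar\partial\bar\psi$ with $\psi$ a $(1,2)$-form. This gives (2).

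\emph{(2)$\Rightarrow$(3).} This is the analytic heart of the argument, and I expect it to be the main obstacle. In dimension three, the equation $\chi_u^2\wedge\omega=f\omega^3$ with $\chi_u\in\Gamma^2_\omega$ can be recast as a twisted supercritical dHYM-type equation. Concretely, with eigenvalues $\lambda_i$ of $\chi_u$ relative to $\omega$, the condition reads $\sigma_2(\lambda)=\tilde f$ on the $\Gamma_2$ cone. Chen's theorem \cite{Chen} requires a subsolution, which is a $C$-subsolution/cone condition of the form $\chi'\wedge\omega>0$ in a suitable sense. The key point is that $\partial_{\lambda_i}\sigma_2=\sigma_1-\lambda_i$, so the relevant subsolution condition for $\sigma_2$ is the positivity of the $(2,2)$-form $\chi\wedge\omega$, up to normalization by $f$.

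Chen's result permits a twist by $\partial\psi+\bar\partial\bar\psi$, because the equation only depends on closedness in the appropriate Aeppli/Bott--Chern sense. So (2) provides the needed subsolution. One then uses the method of continuity with Chen's estimates, or invokes his theorem directly, to obtain $u$. I would check carefully that the normalization $\int\chi^2\wedge\omega=\int f\omega^3$ and the supercritical phase condition match Chen's hypotheses. If $f$ is not constant, one may need to absorb $f$ into a conformal change of $\omega$ in the twisting. This matching is the delicate step.

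\emph{(3)$\Rightarrow$(1).} Take a solution $u$. Then $\chi_u\in\Gamma^2_\omega$, so $\sigma_1(\chi_u)>0$, which gives $\int_M\chi\wedge\omega^2=\int_M\chi_u\wedge\omega^2>0$.

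For an irreducible surface $Y$, note that $\chi_u\in\Gamma^2_\omega$ implies $\chi_u\wedge\omega$ is a strictly positive $(2,2)$-form. Indeed, its eigenvalues relative to $\omega^2$ are $\sigma_1-\lambda_i=\partial_i\sigma_2>0$ on $\Gamma_2$. Restricting a strictly positive $(2,2)$-form to the regular part of $Y$ gives a positive volume form. Using closedness of $\chi$, $\chi_u$ and $\omega$, integration by parts on the subvariety (Stokes for the current of integration $[Y]$) gives
\[
\int_Y\chi\wedge\omega=\int_Y\chi_u\wedge\omega>0.
\]
This completes the cycle.
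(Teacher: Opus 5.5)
Your implications (1)$\Rightarrow$(2) and (3)$\Rightarrow$(1) are correct and agree with the paper. The gap is in (2)$\Rightarrow$(3). You claim that Chen's theorem ``permits a twist by $\partial\psi+\bar\partial\bar\psi$'', so that (2) ``provides the needed subsolution''. This is unjustified, and it is precisely the content of the implication. The equation is a pointwise PDE in $u$. Any subsolution must be a form $\chi+\ii\partial\bar\partial\underline u$ in $[\chi]$, and the cone condition you have in mind is $(\chi+\ii\partial\bar\partial\underline u)\wedge\omega>0$. The Aeppli correction $\partial\psi+\bar\partial\bar\psi$ is in general not of the form $\ii\partial\bar\partial\underline u\wedge\omega$. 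By Hahn--Banach, the obstructions to finding such a $\underline u$ are nonzero positive $(1,1)$-currents $T$ with $\ii\partial\bar\partial T\wedge\omega=0$ and $\int_M T\wedge\chi\wedge\omega\leq 0$. Condition (2) only controls the pairing with $d$-closed positive currents (Lemma~\ref{lem:xiao-duality}), which form a smaller family. So no formal argument turns (2) into a subsolution; producing one is essentially what (2)$\Rightarrow$(3) asserts, and your step assumes it. You also misdescribe Chen's result. It is a Nakai--Moishezon-type criterion whose hypotheses are uniform intersection inequalities on all subvarieties along a test family, not a subsolution. Moreover, the $2$-Hessian equation with general $f$ is not itself recast as a dHYM equation, and a conformal change of $\omega$ would destroy its closedness anyway.

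The missing idea is to use (2) only numerically, and to use the dHYM equation only as an auxiliary device for producing a $2$-admissible representative. Integrate $\chi\wedge\omega+\partial\psi+\bar\partial\bar\psi\geq\delta\omega^2$ over $M$ and over each surface $Y$; Stokes kills the $\psi$-terms. This gives $\int_M\chi\wedge\omega^2>0$ and the \emph{uniform} bound $\int_Y\chi\wedge\omega\geq\delta\int_Y\omega^2$. Take $\eta=a\omega$ with $a$ small so that $\int_M\Ima(\chi+\ii\eta)^3=3\int_M\chi^2\wedge\eta-\int_M\eta^3>0$; this is where $\int_M\chi^2\wedge\omega>0$ enters. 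With $\theta$ close to $\pi$, the functionals $F_{\theta,p}=\Rea(\chi+\ii\eta)^p-\cot\theta\,\Ima(\chi+\ii\eta)^p$ then satisfy Chen's uniform inequalities for $\chi+t\eta$ on every subvariety and every $t\geq0$. Chen's theorem yields $v$ with $\chi_v$ in the phase cone, so $\sum_j\arccot\lambda_j<\pi$, and Yuan's lemma gives $\chi_v\in\Gamma^2_\omega$. Only then does one solve $\chi_u^2\wedge\omega=f\omega^3$, by Sz\'ekelyhidi's theorem with the admissible background $\chi_v$; this handles every positive $f$ with the correct normalization.
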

 
As consequences of Theorem~\ref{thm:main-threefold}, we  prove Conjectures~\ref{conj:szekelyhidi} and~\ref{conj:murakami} when $n=3$ and $k=2$. Together with the previously known cases for $k=n$, our results fill the remaining case in complex dimension three and
thus show that Conjectures~\ref{conj:szekelyhidi} and~\ref{conj:murakami} hold on compact K\"ahler threefolds.

Finally, we consider the boundary case. The following statement is the boundary version of Theorem~\ref{lem:gauduchon-criterion}.

\begin{theorem}\label{prop:boundary-gauduchon}
Let $(M,\omega)$ be a compact K\"ahler manifold of dimension $n$, and let $\chi$ be a real closed $(1,1)$-form. Suppose that
\[
\int_M\chi^2\wedge\omega^{n-2}>0,\qquad \int_M\chi\wedge\omega^{n-1}>0,
\]
and that, for any irreducible divisor $D\subset M$,
\[
\int_D\chi\wedge\omega^{n-2}\geq0.
\]
Then there exist finitely many exceptional prime divisors $D_1,\ldots,D_m$ and positive numbers $a_1,\ldots,a_m$ such that
\[
\left([\chi]-\sum_{i=1}^ma_i\{D_i\}\right)\wedge[\omega]^{n-2}
\]
belongs to the Gauduchon cone and
\[
\int_M\left([\chi]-\sum_{i=1}^ma_i\{D_i\}\right)^2\wedge[\omega]^{n-2}>0.
\]
\end{theorem}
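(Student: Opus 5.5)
We prove Theorem~\ref{prop:boundary-gauduchon} by reducing it to the strict case, Theorem~\ref{lem:gauduchon-criterion}, after subtracting the divisors on which the numerical inequality fails to be strict. We use the duality between the Gauduchon cone in $H^{n-1,n-1}_{\A}(M,\mathbb R)$ and the pseudo-effective cone in $H^{1,1}_{\BC}(M,\mathbb R)$: a class $\Theta$ lies in the Gauduchon cone if and only if $\Theta\cdot\{T\}>0$ for every nonzero pseudo-effective class $\{T\}$. Write a pseudo-effective class via Boucksom's divisorial Zariski decomposition, $\{T\}=Z(T)+\sum_j \nu_j\{E_j\}$, with $Z(T)$ modified nef and the $E_j$ exceptional prime divisors.

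\textbf{Step 1: pairing against $[\chi]\wedge[\omega]^{n-2}$.} On the modified nef part, $[\chi]\cdot Z\cdot[\omega]^{n-2}\ge0$, with equality only if $Z=0$. This follows from the Hodge index theorem for the pair $([\chi],Z)$ relative to $\omega^{n-2}$: since $[\chi]^2\omega^{n-2}>0$ and $[\chi]\omega^{n-1}>0$, the class $[\chi]$ lies in the positive cone component containing $\omega$, and $Z$ is a limit of classes in the closure of that cone. This part is presumably identical to the proof of Theorem~\ref{lem:gauduchon-criterion}. The divisorial part satisfies $\int_{E_j}\chi\wedge\omega^{n-2}\ge0$, but may vanish. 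Let $D_1,\dots,D_m$ be the prime divisors with $\int_{D}\chi\wedge\omega^{n-2}=0$ that can occur in negative parts of Zariski decompositions.

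\textbf{Step 2: finiteness of the bad divisors.} We claim each such $D$ is exceptional, and that there are only finitely many. Suppose $D$ were not exceptional, say $\{D\}$ is a limit of movable classes. Hodge index gives $\{D\}^2\omega^{n-2}\le0$ when $\chi\cdot D\cdot\omega^{n-2}=0$, and Step 1 applied to the modified nef part then forces a contradiction. For exceptional $D_i$, the classes $\{D_i\}$ with $\chi\cdot D_i\cdot\omega^{n-2}=0$ all lie in the hyperplane $[\chi]^\perp$, which is negative definite for the form $q(\alpha,\beta)=\alpha\beta\omega^{n-2}$ restricted to the span of such divisors. An exceptional family has linearly independent classes (Boucksom), so there are at most $h^{1,1}$ of them. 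This step, which shows that the bad divisors are finitely many and exceptional with negative definite intersection matrix $(D_i\cdot D_j\cdot\omega^{n-2})$, is the main obstacle.

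\textbf{Step 3: choice of the $a_i$.} Because the matrix $(D_iD_j\omega^{n-2})$ is negative definite with nonnegative off-diagonal entries, it is the negative of an M-matrix. We can therefore choose $a_i>0$ such that
\[
\Big(\sum_j a_jD_j\Big)\cdot D_i\cdot\omega^{n-2}<0\quad\text{for all }i,
\]
and then scale the vector $(a_i)$ down by a small factor. Set $\alpha=[\chi]-\sum a_i\{D_i\}$. Then:
\begin{itemize}
\item $\alpha\cdot D_i\cdot\omega^{n-2}>0$ for each $i$;
\item for every other prime $D$, $\alpha\cdot D\cdot\omega^{n-2}\ge \chi\cdot D\cdot\omega^{n-2}>0$, since $D\cdot D_i\cdot\omega^{n-2}\ge0$;
\item $\alpha\cdot\omega^{n-1}>0$ for small scaling;
\item $\alpha^2\omega^{n-2}=[\chi]^2\omega^{n-2}+q\big(\sum a_iD_i,\sum a_iD_i\big)-2\sum a_i\,\chi D_i\omega^{n-2}$, which is $>0$ for small $a$ because the last term vanishes.
\end{itemize}

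\textbf{Step 4: conclusion.} The class $\alpha$ is a Bott--Chern class, not necessarily a smooth closed form. Represent it by a smooth closed form. The argument of Theorem~\ref{lem:gauduchon-criterion} only uses the numerical hypotheses through intersection numbers, so it applies to $\alpha$ and shows that $\alpha\wedge[\omega]^{n-2}$ is Gauduchon. Alternatively, we can verify the duality pairing directly with the inequalities obtained in Step 3.
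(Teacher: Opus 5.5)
\textbf{There is a genuine gap, in the second item of Step 3.} Your Steps 1 and 2 are fine. So are the first, third and fourth items of Step 3. The M-matrix argument is an acceptable substitute for the paper's Lemma~\ref{lem:matrix}.

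The second item fails. For a prime divisor $D\notin\{D_1,\dots,D_m\}$,
\[
q(\alpha,\{D\})=q(\chi,\{D\})-\sum_i a_i\,q(\{D_i\},\{D\}),\qquad q(\{D_i\},\{D\})\ge 0 ,
\]
so the inequality runs the other way: $q(\alpha,\{D\})\le q(\chi,\{D\})$. Subtracting the $D_i$ lowers the pairing with every divisor that meets some $D_i$. There may be infinitely many prime divisors with $q(\chi,\{D\})>0$ but arbitrarily small relative to $q(\omega,\{D\})$. So ``scale $a$ down'' proves nothing unless you also have a uniform bound $q(\chi,\{D\})\ge\varepsilon\,q(\omega,\{D\})$ for all $D$ outside a finite set. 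The other half, $q(\{D_i\},\{D\})\le C\,q(\omega,\{D\})$, is easy, since $C[\omega]-\{D_i\}$ is K\"ahler for $C$ large. Your Step 2 controls only divisors with $q(\chi,\{D\})=0$ exactly, and says nothing about those with small positive pairing.

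\textbf{How the paper gets the uniformity.} It fixes $\delta>0$ with $q(\chi-\delta\omega,\chi-\delta\omega)>0$ and $q(\chi-\delta\omega,\omega)>0$. It then takes the bad set to be $S=\{D:\ q(\chi-\delta\omega,\{D\})\le 0\}$. The Hodge-index argument still applies: for a nonzero $E\ge0$ supported on $S$, pass to $\widetilde\chi+t\omega$ with $t\ge0$ chosen so that it is $q$-orthogonal to $E$. This gives $q(E,E)<0$, so $S$ is finite and exceptional. For $D\notin S$ one then has
\[
q(\chi-sE,\{D\})=q(\chi-\delta\omega,\{D\})+q(\delta\omega-sE,\{D\})>0
\]
as soon as $\delta[\omega]-sE$ is K\"ahler. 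With this bad set, $q(\chi,\{D_i\})$ need not vanish. Your expansion of $\alpha^2$ then picks up a cross term of order $s$, which is harmless for small $s$. Also the span of the $D_i$ no longer lies in $[\chi]^\perp$, so negativity on the nonnegative cone must come from the $\widetilde\chi+t\omega$ argument instead.

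\textbf{How to patch your own version.} You can keep your bad set $\{q(\chi,\{D\})=0\}$ and use the finiteness of $S$ only to obtain the missing uniform bound:
\begin{itemize}
\item divisors in $S$ with $q(\chi,\{D\})>0$ are finitely many, so small $a$ handles them;
\item every $D\notin S$ satisfies $q(\chi,\{D\})>\delta\,q(\omega,\{D\})$, which beats $\sum_i a_i q(\{D_i\},\{D\})\le C|a|\,q(\omega,\{D\})$ once $C|a|<\delta$.
\end{itemize}
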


As an application of Theorem~\ref{prop:boundary-gauduchon}, Theorem~\ref{thm:main-threefold}, and the pluripotential theory developed in~\cite{PangSunWangZhou}, we obtain the following result. Here $\overline{\Gamma}^2_{\omega}$ denotes the closure of the cone $\Gamma_{\omega}^2$.

\begin{corollary}\label{thm:boundary-hessian}
Let $(M,\omega)$ be a compact K\"ahler threefold, and let $\chi$ be a real closed $(1,1)$-form. Suppose that
\[
\int_M\chi^2\wedge\omega>0,\qquad \int_M\chi\wedge\omega^2>0,
\]
and that, for any irreducible analytic surface $Y\subset M$,
\[
\int_Y\chi\wedge\omega\geq0.
\]
Then $\chi$ is $(\omega,2)$-big. In particular, suppose $\chi\in\overline{\Gamma}^2_{\omega}$ and
$
\int_M\chi^2\wedge\omega>0,
$ and let $0 
\leq f \in L^p(M,\omega^3)$ with $p>\frac{3}{2}$ and
$\int_M f\,\omega^3>0$. 
Then for any constant $\lambda>0$,  there exists a unique  function
$
u\in\operatorname{SH}_2(M,\chi,\omega)\cap L^\infty(M)$
satisfying
\[
(\chi+\ii\partial\bar\partial u)^2\wedge\omega=e^{\lambda u}f\omega^3
\]
in the potential sense. Furthermore, we have
$
\operatorname{osc}_M u \leq C,$
where $C$ is a constant depending on
$\chi,\omega,\lambda,p,M,\|f\|_p$.
\end{corollary}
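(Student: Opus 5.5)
We prove the corollary in two parts: first the bigness claim, then the equation.

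\textbf{Bigness.} Apply Theorem~\ref{prop:boundary-gauduchon} with $n=3$. This produces exceptional prime divisors $D_1,\dots,D_m$ and constants $a_i>0$ such that the class $\beta=[\chi]-\sum a_i\{D_i\}$ has two properties:
\[
\int_M\beta^2\wedge\omega>0,
\]
and $\beta\wedge[\omega]$ lies in the Gauduchon cone.

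Fix a smooth closed representative $\chi'$ of $\beta$. Membership in the Gauduchon cone means there is a smooth $(1,2)$-form $\psi$ with
\[
\chi'\wedge\omega+\partial\psi+\bar\partial\bar\psi>0.
\]
This is statement (2) of Theorem~\ref{thm:main-threefold} for $\chi'$, and the normalization \eqref{eq:normalization} holds because $\int_M\beta^2\wedge\omega>0$. By the implication (2)$\Rightarrow$(3), choosing $f$ constant, we get a smooth $v$ with $\chi'_v=\chi'+\ii\partial\bar\partial v\in\Gamma^2_\omega$.

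Next we realise the divisors. Each $D_i$ carries a Hermitian metric $h_i$ on $\mathcal O(D_i)$, and Poincaré--Lelong gives
\[
[D_i]=\Theta_{h_i}+\ii\partial\bar\partial\log|s_i|^2_{h_i}.
\]
Choose $\theta_i=\Theta_{h_i}$ and set
\[
\chi'=\chi-\sum a_i\theta_i .
\]
Then
\[
\chi+\ii\partial\bar\partial\Big(v+\sum a_i\log|s_i|^2_{h_i}\Big)=\chi'_v+\sum a_i[D_i],
\]
where $\chi'_v\in\Gamma^2_\omega$ is smooth and $\sum a_i[D_i]\ge0$ is a positive current. Since $\Gamma^2_\omega$ is a convex cone stable under adding positive $(1,1)$-forms, the function
\[
\varphi=v+\sum a_i\log|s_i|^2
\]
is $(\chi,\omega,2)$-subharmonic with analytic singularities. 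Moreover $\chi+\ii\partial\bar\partial\varphi\ge\chi'_v\ge\varepsilon\,\omega$ in the $2$-sense, i.e.\ $\chi+\ii\partial\bar\partial\varphi-\varepsilon\omega\in\overline\Gamma^2_\omega$ for small $\varepsilon>0$. This is exactly the definition of $(\omega,2)$-big in \cite{PangSunWangZhou}, and we will check that their definition matches this formulation (an $m$-subharmonic potential dominating $\varepsilon\omega$).

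\textbf{The equation.} Assume $\chi\in\overline\Gamma^2_\omega$ and $\int\chi^2\wedge\omega>0$. We first verify the numerical hypotheses of the first part.

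\begin{itemize}
\item Taking a limit from $\Gamma^2_\omega$ (so $\chi+t\omega\in\Gamma^2_\omega$ for $t>0$), Gårding-type inequalities give $\chi\wedge\omega^2\ge0$ pointwise. In fact the integral is positive: if $\chi\wedge\omega^2\equiv0$ then $\chi^2\wedge\omega\le\frac{1}{3}(\chi\wedge\omega^2)^2/\omega^3\cdot$const$=0$ by Maclaurin's inequality (up to normalization), contradicting $\int\chi^2\wedge\omega>0$.
\item For a surface $Y$, $\chi|_Y$ pairs with $\omega|_Y$ nonnegatively, since the restriction of a form in $\overline\Gamma^2_\omega$ wedged with $\omega^{n-2}$ is a nonnegative $(2,2)$-form. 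This gives $\int_Y\chi\wedge\omega\ge0$.
\end{itemize}

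So $\chi$ is $(\omega,2)$-big by the first part. The existence, uniqueness and oscillation bound for
\[
(\chi+\ii\partial\bar\partial u)^2\wedge\omega=e^{\lambda u}f\omega^3
\]
then follow from the theory of \cite{PangSunWangZhou} for big, semipositive (in the $\overline\Gamma^2$ sense) reference forms with $L^p$ densities, $p>n/k=3/2$. We only need to match hypotheses.

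\textbf{Main obstacle.} We expect the delicate point to be the bigness step: identifying Gauduchon-cone membership of $\beta\wedge[\omega]$ with condition (2) for a representative, and making sure that adding the divisor currents preserves $2$-subharmonicity in the precise sense of \cite{PangSunWangZhou}. The latter requires approximating $\log|s_i|^2$ by smooth quasi-psh functions and using that adding positive currents keeps one in $\overline\Gamma^2$.
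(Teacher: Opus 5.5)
Your proposal is correct and follows the paper's proof essentially step by step. You use the boundary Gauduchon criterion, then the implication (2)$\Rightarrow$(3) of Theorem~\ref{thm:main-threefold} applied to $\chi-\sum_i a_i\theta_i$, then the Lelong--Poincar\'e formula to build the potential, and finally \cite[Theorem~11.1]{PangSunWangZhou} after checking the numerical hypotheses from $\chi\in\overline{\Gamma}^2_\omega$. The only deviation is in proving $\int_M\chi\wedge\omega^2>0$: you combine $\chi\wedge\omega^2\ge0$ with the pointwise Newton--Maclaurin inequality $\sigma_2(\lambda)\le\frac13\sigma_1(\lambda)^2$ (valid for all real $\lambda$), whereas the paper uses the Hodge index inequality; both work.
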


The paper is organized as follows. Section~2 recalls cone duality, Boucksom's divisorial Zariski decomposition, and the relevant intersection-theoretic facts. Section~3 proves Theorem~\ref{lem:gauduchon-criterion}. Section~4 proves Theorem~\ref{thm:main-threefold}. Section~5 derives the two numerical conjectures in dimension three. Section~6 proves Theorem~\ref{prop:boundary-gauduchon} and Corollary~\ref{thm:boundary-hessian}.

\section{Preliminaries}

\subsection{Bott--Chern and Aeppli cones}

Let $X$ be a compact complex manifold. The Bott–Chern cohomology group  is defined as 
\[
H^{p,p}_{\BC}(X,\mathbb{R})=
\frac{\{\alpha\in A^{p,p}_{\mathbb{R}}(X):d\alpha=0\}}
{\ii\partial\bar\partial A^{p-1,p-1}_{\mathbb{R}}(X)},
\]
and the Aeppli cohomology group is defined as 
\[
H^{p,p}_{\A}(X,\mathbb{R})=
\frac{\{\alpha\in A^{p,p}_{\mathbb{R}}(X):\partial\bar\partial\alpha=0\}}
{\bigl(\partial A^{p-1,p}(X)+\bar\partial A^{p,p-1}(X)\bigr)\cap A^{p,p}_{\mathbb{R}}(X)}.
\]
These groups can equivalently be defined using currents. We consider the following cones.

\begin{definition}\label{def:cones}
Let X be a compact complex manifold of  dimension $n$.
\begin{enumerate}[label=(\arabic*),leftmargin=2.4em]
\item The pseudo-effective cone $\E\subset H^{1,1}_{\BC}(X,\mathbb{R})$ is generated by $d$-closed positive $(1,1)$-currents.
\item The Aeppli pseudo-effective cone $\EA\subset H^{1,1}_{\A}(X,\mathbb{R})$ is generated by positive $\partial\bar\partial$-closed $(1,1)$-currents.
\item The balanced cone $\B\subset H^{n-1,n-1}_{\BC}(X,\mathbb{R})$ consists of classes containing smooth, $d$-closed, strictly positive $(n-1,n-1)$-forms.
\item The Gauduchon cone $\G\subset H^{n-1,n-1}_{\A}(X,\mathbb{R})$ consists of classes containing smooth, $\partial\bar\partial$-closed, strictly positive $(n-1,n-1)$-forms.
\end{enumerate}
\end{definition}

The natural integration pairings between the Bott–Chern cohomology group and  the Aeppli cohomology group identify the relevant dual cones. We recall the following  duality statements.

\begin{lemma}[Fu--Xiao~\cite{FuXiao}]\label{lem:fu-xiao-duality}
Let $X$ be a compact complex manifold, and let $\Omega$ be a real $d$-closed $(n-1,n-1)$-form. If
\[
\int_X\Omega\wedge T>0
\]
for any nonzero positive $\partial\bar\partial$-closed $(1,1)$-current $T$, then $[\Omega]_{\BC}\in\B$.
\end{lemma}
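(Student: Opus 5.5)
The duality is obtained by a Hahn--Banach separation argument in the finite-dimensional space $H^{n-1,n-1}_{\BC}(X,\mathbb{R})$, dual to $H^{1,1}_{\A}(X,\mathbb{R})$, carried out at the level of forms and currents. We argue by contradiction and suppose that $\Omega$ is not cohomologous, modulo $\ii\partial\bar\partial$ of an $(n-2,n-2)$-form, to any strictly positive $d$-closed form. Fix a hermitian metric $\omega_0$ on $X$ and consider two convex sets in the Fréchet space $\mathcal{D}^{n-1,n-1}_{\mathbb{R}}(X)$ of smooth real $(n-1,n-1)$-forms:
\[
A=\bigl\{\Omega+\ii\partial\bar\partial\eta:\ \eta\in A^{n-2,n-2}_{\mathbb{R}}(X)\bigr\},
\]
an affine subspace, and the open convex cone $C$ of strictly positive $(n-1,n-1)$-forms. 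The assumption says $A\cap C=\emptyset$. Since $C$ is open and convex, Hahn--Banach produces a nonzero continuous linear functional, i.e. a real $(1,1)$-current $T$, and a constant $c$ with $T(\alpha)\ge c$ on $A$ and $T(\beta)<c$ on $C$. (Equivalently, one can take $T>0$ on $C$ and $T\le 0$ on $A$, after replacing $T$ by $-T$.)

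Next we extract the properties of $T$. Since $A$ is an affine subspace, $T$ must be constant on it. Hence $T(\ii\partial\bar\partial\eta)=0$ for all $\eta$, so $\partial\bar\partial T=0$ in the sense of currents. Since $C$ is a cone, the separation forces $T(\beta)\le 0$ on $C$ and $c\ge 0$, or $T(\beta)\ge 0$ with the opposite sign convention. After fixing signs, $T$ is nonnegative on every strictly positive $(n-1,n-1)$-form. By density in the closed positive cone and duality of positivity between $(1,1)$ and $(n-1,n-1)$, $T$ is a positive $(1,1)$-current. It is nonzero because the functional is nonzero. On the affine side, $T(\Omega)=c\le 0$ under the convention where $T>0$ on $C$; more precisely, $\int_X\Omega\wedge T\le 0$.

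This contradicts the hypothesis $\int_X\Omega\wedge T>0$ for every nonzero positive $\partial\bar\partial$-closed $T$. Hence $A\cap C\neq\emptyset$, i.e. $[\Omega]_{\BC}\in\B$. Note that $\int\Omega\wedge T$ is well defined on Bott--Chern classes precisely because $\partial\bar\partial T=0$, which is why $\partial\bar\partial$-closed currents rather than $d$-closed ones are the correct test objects.

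The main technical point is the sign bookkeeping in the separation step: we must ensure that the separating functional is strictly positive on the open cone while being $\le$ its value on the affine space. The key fact is that a linear functional bounded above on a cone is $\le 0$ there and that the bound $c$ can then be taken to be $0$. A second point to handle carefully is the identification of continuous functionals on smooth $(n-1,n-1)$-forms with $(1,1)$-currents. We must also use that nonnegativity on all strictly positive forms implies positivity of the current, which follows since strongly positive $(n-1,n-1)$-forms, together with their limits, are dual to positive $(1,1)$-forms.
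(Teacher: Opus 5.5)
Your proof is correct. It is the standard Hahn--Banach argument: you separate the affine space $\Omega+\ii\partial\bar\partial A^{n-2,n-2}_{\mathbb{R}}(X)$ from the open convex cone of strictly positive $(n-1,n-1)$-forms, and then read off that the separating current is positive and $\partial\bar\partial$-closed. The paper gives no proof of its own and simply cites Fu--Xiao for this duality.

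One slip is cosmetic: the separation gives $T(\Omega)\geq c\geq 0$ rather than $T(\Omega)=c$. After replacing $T$ by $-T$ this yields $\int_X\Omega\wedge T\leq 0$, which is all the contradiction needs.
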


\begin{lemma}[Xiao~\cite{XiaoVolume,XiaoInverse}]\label{lem:xiao-duality}
Let $X$ be a compact complex manifold, and let $\Omega$ be a real $\partial\bar\partial$-closed $(n-1,n-1)$-form. If
\[
\int_X\Omega\wedge T>0
\]
for any nonzero positive $d$-closed $(1,1)$-current $T$, then $[\Omega]_{\A}\in\G$.
\end{lemma}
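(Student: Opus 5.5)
We argue by contradiction, using a Hahn--Banach separation argument in the spirit of Lamari and of the proof of Lemma~\ref{lem:fu-xiao-duality}. Let $\mathcal{F}$ be the Fréchet space of smooth real $(n-1,n-1)$-forms on $X$ with the $C^\infty$ topology. Its topological dual is the space of real $(1,1)$-currents, via $T\mapsto \bigl(\eta\mapsto \int_X \eta\wedge T\bigr)$. Consider two convex subsets of $\mathcal{F}$.
\begin{itemize}
\item The open convex cone $\mathcal{P}$ of strictly positive $(n-1,n-1)$-forms. It is open already in the $C^0$ topology, hence in $\mathcal{F}$.
\item The affine subspace
\[
\mathcal{A}=\Omega+\{\partial\psi+\bar\partial\bar\psi:\ \psi\in A^{n-2,n-1}(X)\}.
\]
\end{itemize}
Every form in $\mathcal{A}$ is $\partial\bar\partial$-closed and represents $[\Omega]_{\A}$. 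So $[\Omega]_{\A}\in\G$ exactly when $\mathcal{A}\cap\mathcal{P}\neq\emptyset$. Suppose, for contradiction, that $\mathcal{A}\cap\mathcal{P}=\emptyset$.

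The geometric Hahn--Banach theorem separates an open convex set from a disjoint convex set. It gives a nonzero continuous linear functional, that is, a nonzero real $(1,1)$-current $T$, and a constant $c$ such that
\[
\int_X \eta\wedge T> c\ \ \text{for all }\eta\in\mathcal{P},\qquad \int_X(\Omega+\partial\psi+\bar\partial\bar\psi)\wedge T\le c\ \ \text{for all }\psi.
\]
We then extract the properties of $T$ in three steps.
\begin{enumerate}
\item Replace $\eta$ by $t\eta$ and let $t\to0^+$. This gives $c\le 0$.
\item Let $t\to\infty$. This gives $\int_X\eta\wedge T\ge0$ for every strictly positive $\eta$. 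By density, the same holds for all positive forms, so $T$ is a positive $(1,1)$-current. It is nonzero by construction.
\item The left side of the second inequality is affine in $\psi$ and bounded above. Its linear part must therefore vanish: $\int_X(\partial\psi+\bar\partial\bar\psi)\wedge T=0$ for all $\psi$. Replacing $\psi$ by $\ii\psi$ separates the two terms, so $\int_X\partial\psi\wedge T=0$ for all $\psi$. Integrating by parts, $\bar\partial T=0$ (up to the bidegree bookkeeping). Taking conjugates, since $T$ is real, $\partial T=0$. Hence $dT=0$.
\end{enumerate}
So $T$ is a nonzero positive $d$-closed $(1,1)$-current with $\int_X\Omega\wedge T\le c\le 0$. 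This contradicts the hypothesis, so $\mathcal{A}\cap\mathcal{P}\ne\emptyset$, and a representative $\Omega+\partial\psi+\bar\partial\bar\psi$ is a strictly positive $\partial\bar\partial$-closed form. Hence $[\Omega]_{\A}\in\G$.

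The main point needing care is the functional-analytic setup. One must work in a locally convex space where $\mathcal{P}$ is open and whose dual is exactly the space of currents; $\mathcal{F}$ with the $C^\infty$ topology does this. One must also check that the vanishing of the linear part in $\psi$ gives precisely $dT=0$, not just $\partial\bar\partial T=0$. This is where the Aeppli quotient pairs with closed currents, in contrast to Lemma~\ref{lem:fu-xiao-duality}, where $\ii\partial\bar\partial$-exact perturbations lead to $\partial\bar\partial$-closed currents.
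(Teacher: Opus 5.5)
Your proposal is correct. It is the standard Lamari-type Hahn--Banach separation argument behind Xiao's duality between $\mathcal{E}$ and the Gauduchon cone; the paper itself only quotes the lemma without proof. Because you separate directly in the space of forms, where $\mathcal{P}$ is open, you get the strict conclusion at once, without going through $\overline{\mathcal{G}}=\mathcal{E}^\vee$ and a separate compactness argument.

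One harmless slip: by Stokes, $\int_X\partial\psi\wedge T=\pm\int_X\psi\wedge\partial T$ for $\psi\in A^{n-2,n-1}(X)$, so the vanishing yields $\partial T=0$ directly, not $\bar\partial T=0$. Then $\bar\partial T=0$ follows by conjugation, or from the other separated term, so $dT=0$ as you claim.
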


Lemma~\ref{lem:xiao-duality} is one form of the duality $\overline{\G}=\E^\vee$ needed in this paper.

\subsection{Divisorial Zariski decomposition}

We recall the terminology of Boucksom~\cite{Boucksom}. Fix a Hermitian form $\omega_0$ on $X$.

\begin{definition}\label{def:modified-cones}
A class $\alpha\in H^{1,1}_{\BC}(X,\mathbb{R})$ is \emph{modified K\"ahler} if it contains a K\"ahler current $T$ whose generic Lelong number along any prime divisor is zero. It is \emph{modified nef} if, for any $\varepsilon>0$, it contains a closed current $T_\varepsilon\geq-\varepsilon\omega_0$ whose generic Lelong number along any prime divisor is zero.
\end{definition}

The modified K\"ahler and modified nef cones are denoted by $\MK$ and $\MN$, respectively. The latter is the closure of the former.

\begin{proposition}[Boucksom~\cite{Boucksom}]\label{prop:modified-kahler}
A class $\alpha$ lies in $\MK$ if and only if there exist a modification $\mu:\widetilde X\to X$ and a K\"ahler class $\widetilde\alpha$ on $\widetilde X$ such that $\alpha=\mu_*\widetilde\alpha$.
\end{proposition}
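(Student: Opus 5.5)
The plan is to prove the two implications separately. The direction from a modification to $\MK$ is a direct pushforward computation. The converse combines Demailly's regularization of currents with resolution of singularities.

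\emph{($\Leftarrow$)} Let $\mu:\widetilde X\to X$ be a modification and $\widetilde\omega$ a Kähler form in $\widetilde\alpha$. Set $T:=\mu_*\widetilde\omega$.
\begin{itemize}
\item $T$ is a closed positive current, and $T\in\mu_*\widetilde\alpha=\alpha$.
\item $T$ is a Kähler current. Since $\mu^*\omega_0$ is a smooth semipositive form and $\widetilde X$ is compact, there is $c>0$ with $\widetilde\omega\geq c\,\mu^*\omega_0$. Hence $T\geq c\,\mu_*\mu^*\omega_0=c\,\omega_0$.
\item $T$ has zero generic Lelong number along every prime divisor. Outside the image $Z=\mu(\mathrm{Exc}(\mu))$, which has codimension $\geq2$, the map $\mu$ is a biholomorphism, so $T$ is smooth on $X\setminus Z$. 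The generic point of any prime divisor $D$ lies outside $Z$, so $\nu(T,D)=0$.
\end{itemize}

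\emph{($\Rightarrow$)} Let $T\in\alpha$ be a Kähler current with $T\geq 2\varepsilon\omega_0$ and $\nu(T,D)=0$ for every prime divisor $D$. The steps are as follows.
\begin{enumerate}
\item Apply Demailly's regularization theorem. This gives a current $T'\in\alpha$ with analytic singularities such that $T'\geq\varepsilon\omega_0$ and $\nu(T',x)\leq\nu(T,x)$ at every point. Locally $T'=\theta+\ii\partial\bar\partial\varphi$ with $\varphi=c\log\sum_j|f_j|^2+O(1)$.
\item Locate the singular set. The Lelong numbers of $T'$ are positive exactly on $V(\mathcal I)$, where $\mathcal I$ is the ideal generated by the $f_j$. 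A divisorial component of $V(\mathcal I)$ would carry a positive generic Lelong number of $T'$, and hence of $T$, which is excluded. So $\operatorname{codim}V(\mathcal I)\geq2$.
\item Resolve. By Hironaka, there is a composition of blow-ups with smooth centers lying over $V(\mathcal I)$, $\mu:\widetilde X\to X$, that principalizes $\mathcal I$. Then $\mu^*T'=c[D]+\beta$. Here $D$ is an effective divisor supported on the exceptional locus, because the centers have codimension $\geq2$. Also $\beta$ is a smooth closed form with $\beta\geq\varepsilon\mu^*\omega_0$.
\item Correct $\beta$ to a Kähler form. Since $\mu$ is a sequence of blow-ups with smooth centers, there is an effective exceptional divisor $E=\sum b_iE_i$ and a smooth form $\theta_E\in\{E\}$ such that $-\theta_E$ is positive on the fibers of $\mu$. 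For small $\delta>0$, the form $\beta-\delta\theta_E$ is then Kähler; this uses $\beta\geq\varepsilon\mu^*\omega_0$ and a compactness argument. This step also shows that $\widetilde X$ is Kähler.
\item Conclude. Set $\widetilde\alpha:=\{\beta\}-\delta\{E\}$. Exceptional divisors push forward to zero, so
\[
\mu_*\widetilde\alpha=\mu_*\mu^*\alpha-c\,\mu_*\{D\}-\delta\,\mu_*\{E\}=\alpha .
\]
\end{enumerate}

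The main obstacle is step 1 and the control it must provide for step 2. The regularization has to keep a uniform strict positivity $T'\geq\varepsilon\omega_0$ while not creating new Lelong numbers along divisors. I would use the Demailly approximation $\varphi_m$ from Bergman kernels of the multiplier ideals $\mathcal I(m\varphi)$. It satisfies $\nu(\varphi_m,x)\leq\nu(\varphi,x)$ and loses at most an arbitrarily small amount of positivity, which is absorbed by the margin between $2\varepsilon$ and $\varepsilon$. Step 4 is standard once $\mu$ is built from smooth blow-ups.
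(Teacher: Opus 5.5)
Your proposal is correct and is essentially Boucksom's own proof of this result, which the paper quotes without reproving. One direction pushes forward a K\"ahler form; the other uses Demailly regularization with Lelong-number control, Hironaka principalization of the singularity ideal (whose zero set has codimension at least two, so the divisorial part $D$ is $\mu$-exceptional), and then a perturbation by an effective exceptional divisor.

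The only step worth stating more precisely is step~4. What you need there, and what the inductive construction over the successive blow-ups gives without using any closedness of $\omega_0$, is a smooth form $\theta_E\in\{E\}$ with $\mu^*\omega_0-\theta_E>0$ pointwise. Then $\beta-\varepsilon t\,\theta_E\geq\varepsilon(\mu^*\omega_0-t\,\theta_E)>0$ for $0<t\leq1$, so $\beta-\varepsilon t\,\theta_E$ is a K\"ahler form.
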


Let $\alpha$ be a pseudo-effective class. Its minimal multiplicity at $x\in X$ is denoted by $\nu(\alpha,x)$, and its generic minimal multiplicity along a prime divisor $D$ is
\[
\nu(\alpha,D)=\inf_{x\in D}\nu(\alpha,x).
\]
The series $\sum_D\nu(\alpha,D)[D]$ converges as a positive current.

\begin{definition}\label{def:zariski-decomposition}
The negative part and the Zariski projection of $\alpha$ are
\[
N(\alpha)=\sum_D\nu(\alpha,D)D,\qquad Z(\alpha)=\alpha-\{N(\alpha)\}.
\]
The decomposition $\alpha=Z(\alpha)+\{N(\alpha)\}$ is called the divisorial Zariski decomposition.
\end{definition}

\begin{proposition}[Boucksom~\cite{Boucksom}]\label{prop:zariski-modified-nef}
For any pseudo-effective class $\alpha$, the class $Z(\alpha)$ is modified nef.
\end{proposition}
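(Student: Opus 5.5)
The plan is to build, for each $\varepsilon>0$, an explicit closed current in a class close to $Z(\alpha)$ that is almost positive and has no divisorial Lelong numbers. Then we pass to the limit using the fact that the modified nef condition is closed. Throughout I use the standard construction behind $\nu(\alpha,D)$. For $\varepsilon>0$, let $T_{\min,\varepsilon}\in\alpha$ be a closed current with minimal singularities among currents $T\in\alpha$ with $T\geq-\varepsilon\omega_0$. Set $\nu_\varepsilon(D):=\nu(T_{\min,\varepsilon},D)$ for a prime divisor $D$. Then $\nu(\alpha,D)=\sup_{\varepsilon>0}\nu_\varepsilon(D)=\lim_{\varepsilon\to0}\nu_\varepsilon(D)$. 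The $\nu_\varepsilon(D)$ increase as $\varepsilon\downarrow0$, because the admissible set of currents shrinks.

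\textbf{Step 1 (Siu decomposition at fixed $\varepsilon$).} Apply Siu's decomposition to the positive current $T_{\min,\varepsilon}+\varepsilon\omega_0$. Since the smooth form $\varepsilon\omega_0$ carries no Lelong numbers, this gives
\[
T_{\min,\varepsilon}=R_\varepsilon+\sum_D\nu_\varepsilon(D)[D],
\]
where $R_\varepsilon$ is closed, $R_\varepsilon\geq-\varepsilon\omega_0$, and $R_\varepsilon$ has zero generic Lelong number along every prime divisor. The series converges because its mass is bounded by the mass of $T_{\min,\varepsilon}+\varepsilon\omega_0$, which is cohomological and hence bounded independently of $\varepsilon\leq1$. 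Thus the class $\beta_\varepsilon:=\alpha-\{\sum_D\nu_\varepsilon(D)D\}$ contains, for this fixed $\varepsilon$, a current $\geq-\varepsilon\omega_0$ with vanishing divisorial Lelong numbers.

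\textbf{Step 2 (convergence $\beta_\varepsilon\to Z(\alpha)$).} Let $N_\varepsilon:=\sum_D\nu_\varepsilon(D)[D]$. As $\varepsilon\downarrow0$ this is a family of positive closed currents that increases coefficientwise to $N(\alpha)$, with uniformly bounded mass. By monotone convergence, $N_\varepsilon\to N(\alpha)$ weakly. Since cohomology classes of closed currents depend continuously on the current under weak convergence, $\{N_\varepsilon\}\to\{N(\alpha)\}$ in $H^{1,1}_{\BC}(X,\mathbb{R})$. Hence $\beta_\varepsilon\to Z(\alpha)$.

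\textbf{Step 3 (closedness and conclusion).} Fix $\delta>0$. Choose a smooth closed real $(1,1)$-form $\theta$ representing $Z(\alpha)-\beta_\varepsilon$ with $\|\theta\|_{C^0}$ as small as we like. This is possible since $H^{1,1}_{\BC}$ is finite dimensional: fix smooth representatives of a basis and take the corresponding linear combination, whose size is controlled by that of $Z(\alpha)-\beta_\varepsilon$. Take $\varepsilon$ small enough that $\varepsilon\omega_0-\theta\leq\delta\omega_0$. Then $R_\varepsilon+\theta\in Z(\alpha)$ satisfies $R_\varepsilon+\theta\geq-\delta\omega_0$. Adding a smooth form does not change Lelong numbers, so its generic Lelong numbers along all prime divisors are zero. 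Since $\delta$ is arbitrary, $Z(\alpha)$ is modified nef.

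The main obstacle is Step 2, together with the precise meaning of $\nu(\alpha,D)$. One has to make sure the definition of $\nu(\alpha,D)$ as an infimum of minimal multiplicities agrees with $\lim_\varepsilon\nu(T_{\min,\varepsilon},D)$, and that the mass bound holds uniformly in $\varepsilon$. If the minimal-singularity currents are unavailable in the needed form, I would instead pick near-extremal currents $T_\varepsilon$ with $\nu(T_\varepsilon,D)\leq\nu(\alpha,D)$ for all $D$ and $\nu(T_\varepsilon,D)\to\nu(\alpha,D)$. The same Siu argument then applies, with countably many divisors handled by a diagonal choice.
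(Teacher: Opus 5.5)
Your argument is correct, and it is essentially Boucksom's own proof, which the paper only cites without proving: Siu-decompose $T_{\min,\varepsilon}$, observe that $\alpha-\{N_\varepsilon\}$ contains a current $\geq-\varepsilon\omega_0$ with vanishing divisorial Lelong numbers, let $N_\varepsilon\uparrow N(\alpha)$ in mass, and conclude by closedness of $\mathcal{MN}$. Two standard details remain. The identity you flag, $\nu(\alpha,D)=\sup_{\varepsilon}\nu(T_{\min,\varepsilon},D)$, follows from the usual very-general-point argument: for a sequence $\varepsilon_k\downarrow0$, Siu's analyticity theorem and Baire give a point $x\in D$ with $\nu(T_{\min,\varepsilon_k},x)=\nu(T_{\min,\varepsilon_k},D)$ for all $k$, hence $\nu(\alpha,D)\leq\sup_k\nu(T_{\min,\varepsilon_k},D)$, and the reverse inequality is trivial. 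Also, if $\omega_0$ is only Hermitian, then $T_{\min,\varepsilon}+\varepsilon\omega_0$ need not be closed; either apply Siu's decomposition locally to $T_{\min,\varepsilon}+\ii\partial\bar\partial\psi$ with $\varepsilon\omega_0\leq\ii\partial\bar\partial\psi\leq2\varepsilon\omega_0$ (this gives $R_\varepsilon\geq-2\varepsilon\omega_0$, which suffices), or simply take $\omega_0=\omega$ Kähler, as you may on $M$.
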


A finite family of prime divisors is called \emph{exceptional} if the convex cone generated by their classes meets $\MN$ only at the origin. Boucksom proved that the classes of an exceptional family are linearly independent; see Proposition~3.11(3) of~\cite{Boucksom}. Consequently, the cardinality of such a family is bounded by the Picard number.

\subsection{The intersection form}

Let $(M,\omega)$ be a compact K\"ahler manifold of dimension $n$. Define the intersection form 
\begin{equation}\label{eq:intersection-form}
q(\alpha,\beta)=\int_M\alpha\wedge\beta\wedge\omega^{n-2},\qquad \alpha,\beta\in H^{1,1}(M,\mathbb{R}).
\end{equation}

\begin{lemma}[Hodge index inequality, see Section 6.3.2 of \cite{Voisin}]\label{lem:hodge-index-inequality}
For every $\alpha\in H^{1,1}(M,\mathbb{R})$,
\begin{equation}\label{eq:hodge-index-inequality}
\left(\int_M\alpha^2\wedge\omega^{n-2}\right)\left(\int_M\omega^n\right)
\leq\left(\int_M\alpha\wedge\omega^{n-1}\right)^2.
\end{equation}
Equality holds if and only if $\alpha$ is a real multiple of $[\omega]$.
\end{lemma}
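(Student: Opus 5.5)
The plan is to split $\alpha$ orthogonally along $[\omega]$ with respect to the form $q$ of \eqref{eq:intersection-form}, and then to use the Hodge--Riemann bilinear relations on the primitive part. Set $V=\int_M\omega^n>0$ and
\[
c=\frac{1}{V}\int_M\alpha\wedge\omega^{n-1},\qquad \beta=\alpha-c[\omega].
\]
Then $\int_M\beta\wedge\omega^{n-1}=0$, so $\beta$ is a primitive class. Since $q(\beta,[\omega])=\int_M\beta\wedge\omega^{n-1}=0$, the cross term vanishes when we expand:
\[
q(\alpha,\alpha)=c^2V+q(\beta,\beta).
\]
Multiplying by $V$ gives
\[
q(\alpha,\alpha)\,V=\left(\int_M\alpha\wedge\omega^{n-1}\right)^2+V\,q(\beta,\beta).
\]
Hence \eqref{eq:hodge-index-inequality}, together with its equality case, reduces to two claims: $q(\beta,\beta)\le 0$ for every primitive class $\beta$, and equality holds only when $\beta=0$, that is, when $\alpha=c[\omega]$. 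The converse is immediate: if $\alpha=c[\omega]$, both sides of \eqref{eq:hodge-index-inequality} equal $c^2V^2$.

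To prove the claim I represent $\beta$ by its $\omega$-harmonic representative, again denoted $\beta$, which is a real $(1,1)$-form. On a Kähler manifold the contraction $\Lambda_\omega$ commutes with the Laplacian. Therefore $\Lambda_\omega\beta$ is a harmonic function, hence a constant. Because $n\,\beta\wedge\omega^{n-1}=(\Lambda_\omega\beta)\,\omega^n$ and $\int_M\beta\wedge\omega^{n-1}=0$, this constant is zero. So $\beta$ is pointwise primitive.

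The next step is pointwise linear algebra. At a point, choose coordinates in which $\omega=\ii\sum dz_j\wedge d\bar z_j$ and the Hermitian matrix of $\beta$ is $(b_{j\bar k})$. A direct computation gives
\[
\beta^2\wedge\omega^{n-2}=\frac{(n-2)!}{n!}\Bigl((\operatorname{tr}b)^2-|b|^2\Bigr)\omega^n .
\]
Primitivity says $\operatorname{tr}b=0$, so
\[
\beta^2\wedge\omega^{n-2}=-\frac{(n-2)!}{n!}|\beta|^2_\omega\,\omega^n\le 0 .
\]
Integrating over $M$ gives $q(\beta,\beta)=-\frac{(n-2)!}{n!}\|\beta\|^2_{L^2}\le 0$. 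Equality forces $\beta\equiv 0$ as a form, hence $\beta=0$ as a class.

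The only delicate point is that the harmonic representative is pointwise primitive. That is the Kähler identity $[\Delta,\Lambda]=0$, which is standard and is exactly what Section 6.3.2 of \cite{Voisin} provides. The case $n=2$ needs no change: the factor $\omega^{n-2}$ is simply absent and the same computation applies.
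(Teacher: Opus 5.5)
Your proof is correct. The orthogonal splitting $\alpha=c[\omega]+\beta$, followed by the pointwise identity $\beta^2\wedge\omega^{n-2}=-\frac{(n-2)!}{n!}|\beta|^2_\omega\,\omega^n$ for the pointwise-primitive harmonic representative, is the Lefschetz decomposition plus the degree-two Hodge--Riemann bilinear relation. That is exactly the argument the paper invokes by citing Section 6.3.2 of \cite{Voisin}; the paper gives no proof of its own. The one implicit hypothesis is that $M$ is connected: you use it to conclude that the harmonic function $\Lambda_\omega\beta$ is constant, and the statement itself fails without it.
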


The Hodge index theorem~\cite{Voisin} states that $q$ has signature $(1,h^{1,1}(M)-1)$. In particular, if $q(\alpha,\alpha)>0$, then $q$ is negative definite on $\alpha^\perp$.

 {\begin{lemma}\label{lem:modified-nef-square}
			If $\gamma$ is a modified nef class, then
			\[
			q(\gamma,\gamma)\geq 0.
			\]
		\end{lemma}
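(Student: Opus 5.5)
We need to prove: if $\gamma$ is modified nef, then $q(\gamma,\gamma)=\int_M\gamma^2\wedge\omega^{n-2}\geq 0$.

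The plan is to reduce to the modified Kähler case by continuity. Since $\MN$ is the closure of $\MK$ and $q$ is continuous on $H^{1,1}(M,\mathbb{R})$, it suffices to show $q(\alpha,\alpha)\geq 0$ for $\alpha\in\MK$.

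For $\alpha\in\MK$, Proposition~\ref{prop:modified-kahler} gives a modification $\mu:\widetilde M\to M$ and a Kähler class $\widetilde\alpha$ on $\widetilde M$ with $\alpha=\mu_*\widetilde\alpha$. We compare $\int_M\alpha^2\wedge\omega^{n-2}$ with $\int_{\widetilde M}\widetilde\alpha^2\wedge\mu^*\omega^{n-2}$; the latter is $\geq0$, since $\widetilde\alpha$ is Kähler and $\mu^*\omega$ is semipositive.

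By the projection formula, $\int_M\alpha^2\wedge\omega^{n-2}=\int_{\widetilde M}\mu^*\alpha\wedge\widetilde\alpha\wedge\mu^*\omega^{n-2}$. Write $\mu^*\alpha=\mu^*\mu_*\widetilde\alpha=\widetilde\alpha+\sum_j c_j\{E_j\}$, where the $E_j$ are the $\mu$-exceptional prime divisors. The key point is that the $c_j\geq0$. One way to see this: $\widetilde\alpha$ is represented by a Kähler form $\widetilde\omega$, and $\mu_*\widetilde\omega$ is a closed positive current $T$ in $\alpha$. Then $\mu^*T$ is a closed positive current in $\mu^*\alpha$ that equals $\widetilde\omega$ off the exceptional locus, so by Siu decomposition $\mu^*T=\widetilde\omega+\sum c_j[E_j]$ with $c_j\geq0$.

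Hence
\[
q(\alpha,\alpha)=\int_{\widetilde M}\widetilde\alpha^2\wedge\mu^*\omega^{n-2}+\sum_j c_j\int_{E_j}\widetilde\alpha\wedge\mu^*\omega^{n-2}.
\]
Both terms are nonnegative, because $\widetilde\alpha|_{E_j}$ is Kähler and $\mu^*\omega$ is semipositive. (The second terms in fact vanish, since $\mu^*\omega^{n-2}$ restricted to $E_j$ has too-small image dimension when $\dim\mu(E_j)\le n-2$, but nonnegativity suffices.) This gives $q(\alpha,\alpha)\geq0$, and letting $\alpha\to\gamma$ finishes the proof.

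The main obstacle is justifying the decomposition $\mu^*\mu_*\widetilde\alpha=\widetilde\alpha+\sum c_jE_j$ with $c_j\geq0$, in particular pulling back the possibly singular current $\mu_*\widetilde\omega$. I would handle this by writing the current locally as $dd^c$ of a psh potential, which is legitimate since $T$ is closed positive and locally $\partial\bar\partial$-exact. Alternatively, one can argue without currents: the class $\mu^*\alpha-\widetilde\alpha$ is supported on exceptional divisors and is $\mu$-nef-negative, so the negativity lemma gives effectiveness.
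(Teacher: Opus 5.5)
Your argument is correct, but it takes a different route from the paper's.

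\textbf{Comparison of routes.} Both proofs reduce to the modified K\"ahler case by continuity of $q$, and both write $\alpha=\mu_*\widetilde\alpha$ with $\widetilde\alpha$ K\"ahler.

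The paper then stays on $M$. For a K\"ahler form $\widetilde\beta\in\widetilde\alpha$, the current $\mu_*\widetilde\beta$ is smooth off the codimension-$\geq2$ set $\mu(\mathrm{Exc}(\mu))$. By the intersection theory of currents, its self-product $\mu_*\widetilde\beta\wedge\mu_*\widetilde\beta$ is therefore a well-defined closed positive $(2,2)$-current in the class $\alpha^2$. Pairing with $\omega^{n-2}$ gives $q(\alpha,\alpha)\geq0$ in one line.

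You instead push the computation to $\widetilde M$ via the projection formula. Your only singular-current input is then the $(1,1)$-level identity $\mu^*\mu_*\widetilde\omega=\widetilde\omega+\sum_j c_j[E_j]$ with $c_j\geq0$; everything else is an integral of smooth semipositive forms. This has two advantages:
\begin{itemize}
\item it avoids the more delicate facts the paper quotes, namely that the self-intersection of a singular current exists and represents $\alpha^2$;
\item it gives an explicit formula, which even shows $q(\alpha,\alpha)\geq\int_{\widetilde M}\widetilde\omega^2\wedge(\mu^*\omega)^{n-2}>0$ on $\MK$.
\end{itemize}
The price is the projection formula and the identification of $\mu^*T-\widetilde\omega$.

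\textbf{The identification step.} Invoking Siu decomposition alone is slightly off-target. What you really use is that $\mu^*T-\widetilde\omega$ is a closed order-zero $(1,1)$-current supported on $\mathrm{Exc}(\mu)$. By the support theorem it therefore equals $\sum_j c_j[E_j]$, where $c_j$ is the generic Lelong number of $\mu^*T$ along $E_j$, hence $c_j\geq0$. Your negativity-lemma alternative would need more work in the non-projective setting: you would have to show that $\ker\mu_*$ on $H^{1,1}$ is spanned by the classes $\{E_j\}$, and you would need a K\"ahler version of the negativity lemma.

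\textbf{A correction.} Your parenthetical claim that the divisor terms vanish is false when $\dim\mu(E_j)=n-2$. Take the blow-up of a smooth codimension-two submanifold $Z$, with exceptional divisor $E$, and $\widetilde\alpha=\mu^*[A]-\varepsilon\{E\}$ for $A$ K\"ahler and $\varepsilon>0$ small. Then $\mu^*[A]=\widetilde\alpha+\varepsilon\{E\}$, so $c=\varepsilon$. Moreover $\int_E\widetilde\alpha\wedge\mu^*\omega^{n-2}=\varepsilon\int_Z\omega^{n-2}>0$. The restriction $(\mu|_{E_j})^*\omega^{n-2}$ vanishes only when $\dim\mu(E_j)\leq n-3$. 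Since you only use nonnegativity of these terms, the proof is unaffected.
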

		
		\begin{proof}
			The argument is similar to Boucksom's proof of Proposition~4.2(i) in~\cite{Boucksom}. We first assume that $\gamma$ is a modified K\"ahler class.
			By Proposition~\ref{prop:modified-kahler}, there exist a modification $\mu:\widetilde M\to M$ and a K\"ahler class $\widetilde\gamma$ on $\widetilde M$ such that $\gamma=\mu_*\widetilde\gamma$. Let $\widetilde\beta$ be a K\"ahler form in the class $\widetilde\gamma$. Then $\mu_*\widetilde\beta$ is a current in the class $\gamma$ with analytic singularities along a subvariety of codimension at least two.
			By the intersection theory of currents 
			 \cite[Section~2.6]{Boucksom}, the product
			$\mu_*\widetilde\beta\wedge\mu_*\widetilde\beta$
			is well defined as a closed positive $(2,2)$-current and belongs to
			the cohomology class  $\gamma^2$. Since $\omega^{n-2}$ is a smooth positive closed form of bidegree $(n-2,n-2)$, we have
			\[
			\begin{aligned}
				q(\gamma,\gamma)
				&=\int_M\gamma^2\wedge\omega^{n-2}
=\int_M \mu_*\widetilde\beta\wedge\mu_*\widetilde\beta\wedge\omega^{n-2}\geq0 .
			\end{aligned}
			\]
			
			Now suppose that  $\gamma$  is a modified nef class.  Then for any $\varepsilon>0$,
			$
			\gamma_{\varepsilon}=\gamma+\varepsilon[\omega]
			$ is a modified K\"ahler class. By the first part,
			$
			q(\gamma_{\varepsilon},\gamma_{\varepsilon})\geq0 .$
			Passing to the limit and using the continuity of the intersection form
			$q$ gives
			$
			q(\gamma,\gamma)\geq0 .
			$
\end{proof}}

By abuse of notation, for smooth real closed $(1,1)$-forms $\chi_1$, $\chi_2$,  we usually write $q(\chi_1,\chi_2)$ to denote $q([\chi_1],[\chi_2])$ when no confusion can arise. 

\section{A numerical criterion for a Gauduchon class}

We now prove the numerical criterion stated in the introduction.

\begin{theorem}\label{thm:gauduchon-class}
Let $(M,\omega)$ be a compact K\"ahler manifold of dimension $n\geq2$, and $\chi$ be a real closed $(1,1)$-form. Let $q$ be the intersection form on $H^{1,1}(M,\mathbb{R})$ defined by~\eqref{eq:intersection-form}. Assume that
\[
q(\chi,\chi)>0,\qquad q(\chi,\omega)>0,
\]
and
\[
q(\chi,\{D\})>0
\]
for any prime divisor $D$. Then $[\chi\wedge\omega^{n-2}]_{\A}\in\G$.
\end{theorem}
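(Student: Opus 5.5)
By Lemma~\ref{lem:xiao-duality}, applied to the $\partial\bar\partial$-closed (indeed $d$-closed) form $\Omega=\chi\wedge\omega^{n-2}$, it suffices to show that
\[
\int_M\chi\wedge\omega^{n-2}\wedge T>0
\]
for every nonzero positive $d$-closed $(1,1)$-current $T$. Write $\alpha=\{T\}$. The quantity above is $q(\chi,\alpha)$, so the goal is $q(\chi,\alpha)>0$ for every nonzero pseudo-effective class $\alpha\in\E$. The class is nonzero because $\int_M T\wedge\omega^{n-1}>0$ for $T\neq0$, and this integral depends only on the class.

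First, apply Boucksom's divisorial Zariski decomposition, Definition~\ref{def:zariski-decomposition}. This gives
\[
\alpha=Z(\alpha)+\sum_D\nu(\alpha,D)\{D\}.
\]
The sum is in fact finite: the divisors with $\nu(\alpha,D)>0$ form an exceptional family, so there are at most Picard-number many of them. Hence
\[
q(\chi,\alpha)=q(\chi,Z(\alpha))+\sum_D\nu(\alpha,D)\,q(\chi,\{D\}).
\]
The divisor terms are nonnegative by hypothesis, and strictly positive whenever some $\nu(\alpha,D)>0$. It therefore remains to show $q(\chi,\gamma)\ge0$ for $\gamma=Z(\alpha)$, with strict inequality whenever $\gamma\neq0$. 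Indeed, if $\gamma=0$ then $N(\alpha)\neq0$ because $\alpha\neq0$, and the divisor terms give positivity.

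Second, the modified nef part $\gamma=Z(\alpha)$ is handled by the Hodge index theorem. By Proposition~\ref{prop:zariski-modified-nef} and Lemma~\ref{lem:modified-nef-square}, $q(\gamma,\gamma)\ge0$. Also $q(\gamma,\omega)\ge0$, since $\gamma$ is pseudo-effective and $\omega^{n-1}$ is positive. Moreover $q(\gamma,\omega)>0$ when $\gamma\neq0$: a modified nef class with $q(\gamma,\omega)=0$ is represented by positive currents up to $\varepsilon\omega$ with vanishing mass in the limit, hence is zero.

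The standard light-cone argument then applies. Consider the set
\[
C^+=\{\beta:\ q(\beta,\beta)\ge0,\ q(\beta,\omega)>0\}.
\]
Since $q$ has signature $(1,h^{1,1}-1)$, any two nonzero elements $\beta_1,\beta_2\in C^+$ satisfy $q(\beta_1,\beta_2)\ge\sqrt{q(\beta_1,\beta_1)\,q(\beta_2,\beta_2)}\ge0$, by the reverse Cauchy--Schwarz inequality. Both $\chi$ and $\gamma$ lie in $C^+$, and $\chi$ is in the interior because $q(\chi,\chi)>0$. So $q(\chi,\gamma)\ge0$.

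Strictness comes from the negative definiteness of $q$ on $\chi^\perp$. If $q(\chi,\gamma)=0$, then $\gamma\in\chi^\perp$, so $q(\gamma,\gamma)\le0$, with equality only if $\gamma=0$. Combined with $q(\gamma,\gamma)\ge0$, this forces $\gamma=0$. Hence $q(\chi,\gamma)>0$ whenever $\gamma\neq0$.

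The main obstacle is making the light-cone step rigorous. One has to justify that $\chi$ and $\gamma$ lie in the same component of the positive cone, which is where $q(\chi,\omega)>0$ and $q(\gamma,\omega)>0$ enter. I would do this by decomposing $\beta=t[\omega]+\beta_0$ with $\beta_0\perp\omega$, noting that $q$ is negative definite on $\omega^\perp$, and applying Cauchy--Schwarz to the $\beta_0$-parts. This gives
\[
q(\chi,\gamma)\ge t_\chi t_\gamma\,q(\omega,\omega)-\sqrt{-q(\chi_0,\chi_0)}\sqrt{-q(\gamma_0,\gamma_0)}\ge0,
\]
with $t_\chi,t_\gamma>0$, and strictness follows from $q(\chi,\chi)>0$ unless $\gamma=0$. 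A minor point is confirming finiteness of $N(\alpha)$, or at least convergence of the series pairing with $q(\chi,\cdot)$. Convergence follows from the convergence of $\sum_D\nu(\alpha,D)[D]$ as a current, paired against the smooth form $\chi\wedge\omega^{n-2}$.
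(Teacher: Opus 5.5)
Your proposal is correct and follows the paper's proof: Xiao's duality, Boucksom's divisorial Zariski decomposition with the divisor hypothesis handling the negative part, and the Hodge index theorem combined with $q(\gamma,\gamma)\ge0$ for the modified nef part. The only difference is in the linear-algebra step. You use the reverse Cauchy--Schwarz (light-cone) inequality and then negative definiteness on $\chi^\perp$; the paper instead replaces $\chi$ by $\alpha=\chi+c\omega$, with $c\ge0$ chosen so that $q(\alpha,\gamma)=0$, which gives both the sign and strict positivity in one step without discussing cone components.
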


\begin{proof}
The proof is motivated by~\cite{XiaoRemark,XiaoInverse}. By Lemma~\ref{lem:xiao-duality}, it suffices to prove that
\begin{equation}\label{eq:positive-pairing-current}
\int_M\chi\wedge\omega^{n-2}\wedge T>0
\end{equation}
for any nonzero positive closed $(1,1)$-current $T$. Write the divisorial Zariski decomposition of its class as
\[
\{T\}=\gamma+\sum_D\nu(\{T\},D)\{D\},\qquad \gamma=Z(\{T\})\in\MN.
\]
If $\gamma=0$, then at least one coefficient $\nu(\{T\},D)$ is positive, and the assumptions on divisors immediately give~\eqref{eq:positive-pairing-current}. Suppose now that $\gamma\neq0$. Since $\gamma$ is a nonzero pseudo-effective class,
\[
q(\omega,\gamma)=\int_M\gamma\wedge\omega^{n-1}>0.
\]
We claim that $q(\chi,\gamma)>0$. Otherwise, set
\[
c=-\frac{q(\chi,\gamma)}{q(\omega,\gamma)}\geq0,\qquad \alpha=\chi+c\omega.
\]
Then $q(\alpha,\gamma)=0$, while
\[
q(\alpha,\alpha)=q(\chi,\chi)+2cq(\chi,\omega)+c^2q(\omega,\omega)>0.
\]
The Hodge index theorem therefore gives $q(\gamma,\gamma)<0$, because $\gamma\neq0$ lies in $\alpha^\perp$. This contradicts Lemma~\ref{lem:modified-nef-square}. Hence $q(\chi,\gamma)>0$. Finally,
\[
\int_M\chi\wedge\omega^{n-2}\wedge T
=q(\chi,\gamma)+\sum_D\nu(\{T\},D)q(\chi,\{D\})>0,
\]
which proves the theorem.
\end{proof}

\begin{proof}[Proof of Theorem~\ref{lem:gauduchon-criterion}]
The assumptions of Theorem~\ref{lem:gauduchon-criterion} are precisely those of Theorem~\ref{thm:gauduchon-class}, in view of the definition of $q$. 
\end{proof}

\begin{remark}\label{rem:positive-square-essential}
The condition $q(\chi,\chi)>0$ is essential for this argument. Without it, positivity against divisors and against $[\omega]$ does not, in general, ensure that the Aeppli class $[\chi\wedge\omega^{n-2}]_{\A}$ is Gauduchon.
\end{remark}

\section{The 2-Hessian equation in dimension three}

We first record an elementary fact used in the proof.
 
For a Hermitian form $\eta$  and a real $(1,1)$-form $\alpha$ on  a compact complex threefold, let $\lambda_1,\lambda_2,\lambda_3$ be the eigenvalues of $\alpha$ with respect to $\eta$, and set $\theta_j=\arccot\lambda_j\in(0,\pi)$. Define
\[
P(\alpha)=\max_{1\leq i\leq3}\sum_{j\neq i}\theta_j,
\qquad
Q(\alpha)=\sum_{j=1}^3\theta_{j}.
\]
For constants $0<\theta<\Theta<\pi$, the phase cone $\Gamma_{\eta,\theta,\Theta}$ consists of the forms satisfying $P(\alpha)<\theta$ and $Q(\alpha)<\Theta$.

\begin{lemma}[Yuan \cite{Yuan}]\label{lem:phase-cone}
If
$
\theta_1+\theta_2+\theta_3<\pi$, then $\alpha\in\Gamma_{\eta}^2$. In particular,  $\Gamma_{\eta,\theta,\Theta} \subset \Gamma^2_\eta$.
\end{lemma}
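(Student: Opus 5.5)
The argument is pointwise linear algebra: diagonalize $\alpha$ with respect to $\eta$ at a point and work only with the eigenvalues $\lambda_1,\lambda_2,\lambda_3$. At a point, $\alpha^j\wedge\eta^{3-j}>0$ amounts, up to positive constants, to $\sigma_j(\lambda)>0$. So the claim to prove is: if $\theta_j=\arccot\lambda_j\in(0,\pi)$ and $\theta_1+\theta_2+\theta_3<\pi$, then $\sigma_1(\lambda)>0$ and $\sigma_2(\lambda)>0$.

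First, $\sigma_1>0$ and the pairwise facts. Since every $\theta_j>0$, we get $\theta_i+\theta_j<\pi$ for all $i\neq j$. I will use the identity
\[
\cot(\theta_i+\theta_j)=\frac{\lambda_i\lambda_j-1}{\lambda_i+\lambda_j}.
\]
The more robust form is $\sin(\theta_i+\theta_j)=(\lambda_i+\lambda_j)\sin\theta_i\sin\theta_j$, which follows from $\lambda=\cos\theta/\sin\theta$. As $\theta_i+\theta_j\in(0,\pi)$, this gives $\lambda_i+\lambda_j>0$ for every pair. Summing over the three pairs yields $\sigma_1>0$. It also follows that at most one $\lambda_j$ is $\le 0$.

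Second, $\sigma_2>0$, via the complex product. Write $\prod_j(\lambda_j+\ii)=r e^{\ii(\theta_1+\theta_2+\theta_3)}$ with $r>0$, since $\arg(\lambda_j+\ii)=\theta_j$. The imaginary part of the product is $\sigma_2-1$ and the real part is $\sigma_3-\sigma_1$. A total angle in $(0,\pi)$ only gives $\sigma_2>1$ directly when the angle is small, so this needs care. Instead I will do cases.
\begin{enumerate}[label=(\alph*)]
\item If all $\lambda_j>0$, then $\sigma_2>0$ trivially.
\item Otherwise exactly one eigenvalue, say $\lambda_3$, satisfies $\lambda_3\le0$, so $\theta_3\ge\pi/2$ and $\theta_1+\theta_2<\pi-\theta_3\le\pi/2$.
\end{enumerate}
In case (b), $\theta_1+\theta_2<\pi/2$ gives $\cot(\theta_1+\theta_2)>0$, hence $\lambda_1\lambda_2>1$. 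Next,
\[
\sigma_2=\lambda_1\lambda_2+\lambda_3(\lambda_1+\lambda_2),
\]
so it suffices to show $-\lambda_3<\lambda_1\lambda_2/(\lambda_1+\lambda_2)$. The condition $\theta_3<\pi-(\theta_1+\theta_2)$ with cot decreasing gives
\[
\lambda_3>\cot\bigl(\pi-(\theta_1+\theta_2)\bigr)=-\frac{\lambda_1\lambda_2-1}{\lambda_1+\lambda_2}.
\]
Then $\sigma_2>\lambda_1\lambda_2-(\lambda_1\lambda_2-1)=1>0$. (This shows $\sigma_2>1$ in general, which also follows from the product formula.)

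The "in particular" part follows because on $\Gamma_{\eta,\theta,\Theta}$ we have $Q(\alpha)<\Theta<\pi$. Since this holds pointwise, we get $\alpha\in\Gamma^2_\eta$.

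The only delicate step is the sign bookkeeping in case (b), namely the monotonicity of $\cot$ on $(0,\pi)$ and the positivity of $\lambda_1+\lambda_2$. This is handled by the first step.
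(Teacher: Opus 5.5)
Your proof is correct. The paper gives no argument for this lemma; it is quoted from Yuan. Yuan's lemma is stated in the normalization $\sum_j\arctan\lambda_j\ge(n-2)\pi/2$, of which $\sum_j\arccot\lambda_j<\pi$ is the strict form, and it holds in every dimension $n$. It gives the structure $\lambda_{n-1}>0$, $\lambda_{n-1}\ge|\lambda_n|$ for the ordered eigenvalues, together with $\sigma_k(\lambda)\ge0$ for $1\le k\le n-1$. What you have written is therefore a self-contained $n=3$ substitute for the citation rather than a different proof of something the paper proves:
\begin{itemize}
\item Your identity $\sin(\theta_i+\theta_j)=(\lambda_i+\lambda_j)\sin\theta_i\sin\theta_j$ is the three-dimensional form of Yuan's structural statement.
\item Your case (b) gives the strict, quantitative bound $\sigma_2>1$. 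This is what membership in the open cone $\Gamma^2_\eta$ requires, and you get it without any limiting argument.
\end{itemize}
The citation buys generality in $n$ and in $k\le n-1$; your route buys an elementary check with explicit constants.

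One side remark in your proposal is wrong, though it does not affect the proof. The product formula is not limited to small total angles. Since $\arg(\lambda_j+\ii)=\theta_j$, you have $\sigma_2-1=\Ima\prod_j(\lambda_j+\ii)=r\sin(\theta_1+\theta_2+\theta_3)>0$ whenever $\theta_1+\theta_2+\theta_3\in(0,\pi)$. So the case split for $\sigma_2$ is unnecessary, as you yourself note at the end.

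What the product formula cannot deliver is $\sigma_1>0$, because it only sees the total angle modulo $2\pi$. For $\lambda=(-1,-1,-1)$ the total angle is $9\pi/4$, and $\sigma_2=3>1$ while $\sigma_1<0$. So your first step is genuinely required. An alternative is to observe three facts about the region $\{\sum_j\theta_j<\pi\}$:
\begin{itemize}
\item it is convex in the $\theta$-coordinates;
\item it contains $\lambda=(L,L,L)$ for large $L$;
\item $\sigma_2>1$ on it.
\end{itemize}
Hence it lies in the component $\Gamma_2$ of $\{\sigma_2>0\}$.
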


Now we prove Theorem ~\ref{thm:main-threefold}.
\begin{proof}[Proof of Theorem~\ref{thm:main-threefold}]
Assume first that statement~(3) holds. Since $\chi_u\in\Gamma^2_\omega$, the linearization of $\sigma_2$ is positive, which in dimension three is equivalent to the positivity of $\chi_u\wedge\omega$ as a $(2,2)$-form. Hence
\[
\int_M\chi\wedge\omega^2>0,
\qquad
\int_Y\chi\wedge\omega>0
\]
for any irreducible surface $Y$. Thus statement~(3) implies statement~(1).

Assume statement~(1). Then Theorem~\ref{thm:gauduchon-class} shows that $[\chi\wedge\omega]_{\A}$ lies in the Gauduchon cone. Hence there exists a smooth $(1,2)$-form $\psi$ such that
\[
\chi\wedge\omega+\partial\psi+\bar\partial\bar\psi>0,
\]
which is statement~(2).

It remains to prove that statement~(2) implies statement~(3). By compactness, there exists a constant $\delta>0$ such that
\begin{equation}\label{eq:gauduchon-lower-bound}
\chi\wedge\omega+\partial\psi+\bar\partial\bar\psi\geq\delta\omega^2.
\end{equation}
In particular, we have \begin{equation}\label{eq:imaginary-omega-1}
\int_M\chi\wedge\omega^2>0,
\end{equation}
and for any irreducible surface $Y$,
\begin{equation}\label{eq:imaginary-omega-Y-1}
\int_Y\chi\wedge\omega\geq \delta\int_Y\omega^2.
\end{equation}

Set  $\eta=a\omega$ for a constant $a>0$ such that $3\int_M\chi^2\wedge\eta>\int_M\eta^3$. 
Such a choice is possible because of the assumption $\int_M\chi^2\wedge\omega>0$. 
Then
\begin{equation}\label{eq:imaginary-three}
\int_M\Ima(\chi+\ii\eta)^3
=3\int_M\chi^2\wedge\eta-\int_M\eta^3
>0.
\end{equation}
The inequality~\eqref{eq:imaginary-omega-1} gives \begin{equation}\label{eq:imaginary-two}
\int_M\Ima(\chi+\ii\eta)^2\wedge\eta
=2a^2\int_M\chi\wedge\omega^2>0,
\end{equation}
and the inequality~\eqref{eq:imaginary-omega-Y-1} implies for any irreducible surface $Y$,
\begin{equation}\label{eq:imaginary-omega-Y-2}
\int_Y\chi\wedge\eta\geq\frac{\delta}{a}\int_Y\eta^2.
\end{equation}
For $0\leq p\leq3$  and any constant $\theta\in(0,\pi)$,  we consider
\[
F_{\theta,p}(\chi)=\Rea(\chi+\ii\eta)^p-\cot\theta\,\Ima(\chi+\ii\eta)^p,
\quad F_{\theta,0}=1.
\]
Since $\cot\theta\to-\infty$ as $\theta\to\pi^{-}$, and $\eta$ is a K\"ahler form, equations~\eqref{eq:imaginary-three}--\eqref{eq:imaginary-two} and compactness allow us to choose a fixed constant $\theta\in(\pi/2,\pi)$ sufficiently close to $\pi$ such that
\begin{equation}\label{eq:F-positive-integrals}
\int_MF_{\theta,3}(\chi)>0,
\qquad
\int_MF_{\theta,2}(\chi)\wedge\eta>0,
\end{equation}
and
\begin{equation}\label{eq:F-pointwise}
\chi-\cot\theta\,\eta>\eta,
\qquad
\Rea(\chi+\ii\eta)^2-\frac{2\delta}{a}\cot\theta\,\eta^2>\eta^2.
\end{equation}

The first inequality in~\eqref{eq:F-pointwise} gives

\begin{equation}
    \int_MF_{\theta,1}(\chi)\wedge\eta^2>\int_M\eta^3.
\end{equation}

For any irreducible surface $Y$, using inequality~\eqref{eq:imaginary-omega-Y-2} and inequalities in ~\eqref{eq:F-pointwise}, we obtain
\begin{equation}\label{eq:F-surface}
\begin{aligned}
\int_Y F_{\theta,2}(\chi)
&= \int_Y \Rea(\chi+\ii\eta)^2
   -2\cot\theta\int_Y\chi\wedge\eta \\
&\ge \int_Y   \left(\Rea(\chi+\ii\eta)^2
   -\frac{2\delta}{a}\cot\theta\,\eta^2\right)
    > \int_Y \eta^2,
\end{aligned}
\end{equation}
and 
\begin{equation}
\int_YF_{\theta,1}(\chi)\wedge \eta>\int_Y\eta^2.
\end{equation}
For any irreducible curve $C$, the first inequality in~\eqref{eq:F-pointwise} gives
\begin{equation}\label{eq:F-curve}
\int_CF_{\theta,1}(\chi)>\int_C\eta.
\end{equation}
Consider the family $\chi_t=\chi+t\eta$, $t\geq0$. It is a test family in the sense of Chen~\cite{Chen}: $\chi_0=\chi$, the difference $\chi_s-\chi_t$ is positive for $s>t$, and $\chi_t-\cot(\theta/3)\eta>0$ for sufficiently large $t$. For any $p$-dimensional irreducible analytic subvariety $V$,
\begin{equation}\label{eq:F-binomial}
\int_VF_{\theta,p}(\chi_t)
=\sum_{k=0}^p\binom{p}{k}t^{p-k}\int_VF_{\theta,k}(\chi)\wedge\eta^{p-k}.
\end{equation}
All coefficients on the right-hand side are nonnegative by~\eqref{eq:F-positive-integrals}--\eqref{eq:F-curve}. In particular,
\begin{equation}\label{eq:chen-uniform}
\int_VF_{\theta,p}(\chi_t)\geq\frac{3-p}{2}\int_V\eta^p,
\qquad 1\leq p\leq3.
\end{equation}
Set
\[
c_\theta=\frac{\int_MF_{\theta,3}(\chi)}{\int_M\eta^3}>0,
\qquad
\Theta=\frac{\pi+\theta}{2}\in(\theta,\pi).
\]
Chen's existence theorem~\cite[Proposition~5.2]{Chen}, applied to~\eqref{eq:chen-uniform}, yields a smooth function $v$ satisfying
\begin{equation}\label{eq:chen-solution}
F_{\theta,3}(\chi_{v})=c_\theta\eta^3,
\qquad
\chi_{v}\in\Gamma_{\eta,\theta,\Theta}.
\end{equation}
Lemma~\ref{lem:phase-cone} gives
\[
\chi_{v}\in\Gamma^2_\eta=\Gamma^2_\omega.
\]
 Applying the solvability theorem for the complex Hessian equation~\cite{Szekelyhidi} to the background form $\chi_{v}$ and using
\[
\int_M\chi_{v}^2\wedge\omega
=\int_M\chi^2\wedge\omega
=\int_M f\omega^3,
\]
we obtain a smooth function $h$ such that
\[
(\chi_{v}+\ii\partial\bar\partial h)^2\wedge\omega=f\omega^3,
\qquad
\chi_{v}+\ii\partial\bar\partial h\in\Gamma^2_\omega.
\]
Taking $u=v+h$ proves statement~(3).
\end{proof}

\section{Consequences for numerical conjectures}
In this section, we first  solve Sz\'ekelyhidi's conjecture in dimension three by Theorem \ref{thm:main-threefold} and the Hodge index inequality in Lemma \ref{lem:hodge-index-inequality}. Similar arguments can also be used to solve Murakami's conjecture in dimension three. 
\subsection{Sz\'ekelyhidi's conjecture in dimension three}

\begin{theorem}\label{thm:szekelyhidi-dim-three}
	Let $(M,\omega)$ be a compact K\"ahler threefold, and let $\chi\in\Gamma^2_\omega$ be closed. Assume that
	\[
	\int_M\chi^2\wedge\omega=c\int_M\chi\wedge\omega^2,
	\qquad c>0.
	\]
	The following statements are equivalent.
	\begin{enumerate}[label=(\arabic*),leftmargin=2.4em]
		\item For any irreducible analytic surface $Y$,
		\[
		\int_Y(2\chi\wedge\omega-c\omega^2)>0.
		\]
		\item There exists a smooth function $u$ such that
		\[
		\chi_u^2\wedge\omega=c\chi_u\wedge\omega^2,
		\qquad
		\chi_u\in\Gamma^2_\omega.
		\]
	\end{enumerate}
\end{theorem}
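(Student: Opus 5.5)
We plan to reduce Theorem~\ref{thm:szekelyhidi-dim-three} to Theorem~\ref{thm:main-threefold} by a shift of $\chi$ that turns the Hessian quotient equation into a $2$-Hessian equation with constant right-hand side. Set $V=\int_M\omega^3$, $A=\int_M\chi\wedge\omega^2$, and
\[
\chi'=\chi-\tfrac{c}{2}\omega .
\]
Then for any smooth $u$ we have the pointwise identity
\[
\chi'^2_u\wedge\omega=\chi_u^2\wedge\omega-c\,\chi_u\wedge\omega^2+\tfrac{c^2}{4}\omega^3 .
\]
Hence $\chi_u^2\wedge\omega=c\chi_u\wedge\omega^2$ holds if and only if $\chi'^2_u\wedge\omega=\tfrac{c^2}{4}\omega^3$. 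Moreover $2\chi\wedge\omega-c\omega^2=2\chi'\wedge\omega$.

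For $(1)\Rightarrow(2)$ we verify the hypotheses of Theorem~\ref{thm:main-threefold} for $\chi'$.
\begin{itemize}[leftmargin=2.4em]
\item First, $\int_M\chi'^2\wedge\omega=cA-cA+\tfrac{c^2}{4}V=\tfrac{c^2}{4}V>0$, which is the normalization~\eqref{eq:normalization}.
\item Next, $\int_M\chi'\wedge\omega^2=A-\tfrac c2V$. Since $\chi\in\Gamma^2_\omega$ we have $A>0$. The Hodge index inequality (Lemma~\ref{lem:hodge-index-inequality}) gives $cA\cdot V\le A^2$, so $A\ge cV>\tfrac c2 V$, and this integral is positive.
\item Finally, statement (1) is exactly $\int_Y\chi'\wedge\omega>0$ for every irreducible surface $Y$.
\end{itemize}
Thus statement (1) of Theorem~\ref{thm:main-threefold} holds for $\chi'$. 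Its statement (3), applied with the constant $f=c^2/4$ (whose normalization $\int_M f\omega^3=\tfrac{c^2}{4}V$ matches the first bullet), yields $u$ with $\chi'^2_u\wedge\omega=\tfrac{c^2}{4}\omega^3$ and $\chi'_u\in\Gamma^2_\omega$. Then $\chi_u=\chi'_u+\tfrac c2\omega\in\Gamma^2_\omega$, because $\Gamma^2_\omega$ is a convex cone containing the positive forms: in eigenvalue terms, adding a positive multiple of the identity keeps $\sigma_1$ and $\sigma_2$ positive, as $\Gamma_2+\Gamma_n\subset\Gamma_2$. By the identity above, $u$ solves the quotient equation in (2).

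For $(2)\Rightarrow(1)$ we work pointwise with eigenvalues $\lambda$ of $\chi_u$ with respect to $\omega$. Up to fixed combinatorial constants, the equation reads $\sigma_2(\lambda)=c'\sigma_1(\lambda)$. On $\Gamma_2$ the quotient $\sigma_2/\sigma_1$ is strictly increasing in each $\lambda_i$, since $\sigma_1\,\sigma_1(\lambda|i)-\sigma_2>0$ there. Substituting $\sigma_2=c'\sigma_1$ gives $\sigma_1(\lambda|i)>c'$ for each $i$. After tracking the constants relating $\chi_u^2\wedge\omega$, $\chi_u\wedge\omega^2$ and $\omega^3$ to $\sigma_2,\sigma_1$, this says that $2\chi_u\wedge\omega-c\omega^2$ is a strictly positive $(2,2)$-form, i.e.\ $\chi'_u\wedge\omega>0$. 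Integrating over any irreducible surface $Y$ and using that $\ii\partial\bar\partial u\wedge\omega$ integrates to zero on $Y$ gives (1).

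The step needing most care is the constant bookkeeping in this last part, namely checking that the eigenvalue inequality $\sigma_1(\lambda|i)>c'$ is exactly positivity of $2\chi_u\wedge\omega-c\omega^2$. Equivalently, one can apply the shift directly: $\chi'_u$ solves $\chi'^2_u\wedge\omega=\tfrac{c^2}{4}\omega^3>0$. So if $\chi'_u\in\Gamma^2_\omega$, the implication $(3)\Rightarrow(1)$ of Theorem~\ref{thm:main-threefold} already gives the conclusion. Establishing $\chi'_u\in\Gamma^2_\omega$ is precisely what the monotonicity argument above supplies.
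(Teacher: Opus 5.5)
Your proposal is correct and follows the paper's proof. You use the same shift $\chi-\frac c2\omega$ to reduce (1)$\Rightarrow$(2) to Theorem~\ref{thm:main-threefold} with $f=c^2/4$, and your monotonicity argument for (2)$\Rightarrow$(1) is the paper's identity $(\lambda_j+\lambda_k-c)\sigma_1=\lambda_j^2+\lambda_j\lambda_k+\lambda_k^2$ in another form; the constants do work out with $c'=c$, since $\chi_u^2\wedge\omega=\frac{\sigma_2}{3}\omega^3$, $\chi_u\wedge\omega^2=\frac{\sigma_1}{3}\omega^3$, and positivity of $2\chi_u\wedge\omega-c\omega^2$ is exactly $\lambda_j+\lambda_k>c$ for all $j<k$. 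The only cosmetic difference is that you apply the Hodge index inequality to $\chi$ itself, which gives $\int_M\chi\wedge\omega^2\geq c\int_M\omega^3$ directly and avoids the paper's sign-branch discussion for $\chi-\frac c2\omega$.
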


\begin{proof}
	Suppose statement~(2) holds. In local coordinates diagonalizing $\chi_u$ with eigenvalues $\lambda_1,\lambda_2,\lambda_3$, the quotient equation is $\sigma_2(\lambda)=c\sigma_1(\lambda)$. For each $i$, let $\{i,j,k\}=\{1,2,3\}$. Then
	\[
	(\lambda_j+\lambda_k-c)\sigma_1(\lambda)
	=(\lambda_j+\lambda_k)\sigma_1(\lambda)-\sigma_2(\lambda)
	=\lambda_j^2+\lambda_j\lambda_k+\lambda_k^2>0.
	\]
	Since $\sigma_1(\lambda)>0$, we obtain $2\chi_u\wedge\omega-c\omega^2>0$, which implies statement~(1).
	
	Assume statement~(1), and set $
	\alpha=\chi-\frac{c}{2}\omega.$
	The compatibility condition gives
	\[
	\int_M\alpha^2\wedge\omega=\frac{c^2}{4}\int_M\omega^3.
	\]
	By Lemma~\ref{lem:hodge-index-inequality},
	\begin{equation}\label{eq:alpha-hodge}
		\left|\int_M\alpha\wedge\omega^2\right|
		\geq\frac{c}{2}\int_M\omega^3.
	\end{equation}
	Since $\chi\in\Gamma^2_\omega$, one has $\int_M\chi\wedge\omega^2>0$. The negative branch of inequality~\eqref{eq:alpha-hodge} would imply $\int_M\chi\wedge\omega^2\leq0$, so necessarily
	\[
	\int_M\alpha\wedge\omega^2\geq\frac{c}{2}\int_M\omega^3>0.
	\]
	Furthermore, statement~(1) gives
	\[
	\int_Y\alpha\wedge\omega>0
	\]
	for any irreducible surface $Y$.  Thus the numerical conditions in Theorem~\ref{thm:main-threefold} are satisfied for $\alpha$. Applying Theorem~\ref{thm:main-threefold}, we  obtain  a smooth function $u$ solving
	\[
	\alpha_{u}^2\wedge\omega=\frac{c^2}{4}\omega^3,
	\qquad
	\alpha_{u}\in\Gamma^2_\omega.
	\]
	Then $\chi_u=\alpha_u+\frac{c}{2}\omega\in\Gamma^2_\omega$ and 
	\[
	\chi_u^2\wedge\omega=c\chi_u\wedge\omega^2,
	\]
	which proves statement~(2).
\end{proof}

\subsection{Murakami's conjecture in dimension three}

\begin{theorem}\label{thm:murakami-dim-three}
Let $(M,\omega)$ be a compact K\"ahler threefold, and let $\chi$ be a real closed $(1,1)$-form satisfying
\[
\int_M\chi^2\wedge\omega=c\int_M\omega^3,
\qquad c>0.
\]
The following statements are equivalent.
\begin{enumerate}[label=(\arabic*),leftmargin=2.4em]
\item For any $t\geq0$,
\[
\int_M(\chi+t\omega)^2\wedge\omega>0,
\]
and, for any irreducible analytic surface $Y$,
\[
\int_Y\chi\wedge\omega>0.
\]
\item There exists a smooth function $u$ such that
\[
\chi_u^2\wedge\omega=c\omega^3,
\qquad
\chi_u\in\Gamma^2_\omega.
\]
\end{enumerate}
\end{theorem}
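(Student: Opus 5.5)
The plan is to reduce both directions to Theorem~\ref{thm:main-threefold}. The normalization $\int_M\chi^2\wedge\omega=c\int_M\omega^3>0$ supplies hypothesis~\eqref{eq:normalization}. Once we verify $\int_M\chi\wedge\omega^2>0$, condition~(1) of Theorem~\ref{thm:main-threefold} holds. Applying its statement~(3) with the constant function $f=c$ then gives statement~(2) here.

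For (2)$\Rightarrow$(1), let $u$ be as in statement~(2). Since $\chi_u\in\Gamma^2_\omega$, we have $\chi_u+t\omega\in\Gamma^2_\omega$ for every $t\geq0$, because adding $t\omega$ adds $t$ to each eigenvalue, and $\Gamma^2$ is a convex cone containing the positive cone. Hence $(\chi_u+t\omega)^2\wedge\omega>0$ pointwise. Integrating, and using that the integral depends only on the class, gives the first inequality of (1). For the surface inequality, I repeat the first step of the proof of Theorem~\ref{thm:main-threefold}. Membership in $\Gamma^2_\omega$ in dimension three makes $\chi_u\wedge\omega$ a strictly positive $(2,2)$-form, since $\lambda_j+\lambda_k>0$ for all pairs. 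Therefore $\int_Y\chi\wedge\omega=\int_Y\chi_u\wedge\omega>0$ for every irreducible surface $Y$.

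For (1)$\Rightarrow$(2), the only missing ingredient is $\int_M\chi\wedge\omega^2>0$; the surface condition is assumed directly. I would obtain it from the Hodge index inequality, Lemma~\ref{lem:hodge-index-inequality}, together with the first condition in~(1). Write $A=\int_M\chi^2\wedge\omega=cV$, $B=\int_M\chi\wedge\omega^2$ and $V=\int_M\omega^3$. Then
\[
g(t):=\int_M(\chi+t\omega)^2\wedge\omega=A+2tB+t^2V .
\]
Lemma~\ref{lem:hodge-index-inequality} gives $AV\leq B^2$, so $B^2\geq cV^2>0$ and in particular $B\neq0$. Suppose $B<0$. The quadratic $g$ has discriminant $4(B^2-AV)\geq0$, so it has a real root, and that root is at $t_0=-B/V>0$ in the case of a double root. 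More generally, both roots $\bigl(-B\pm\sqrt{B^2-AV}\bigr)/V$ are nonnegative, because $AV>0$ forces $\sqrt{B^2-AV}<|B|=-B$. At such a root $t\geq0$ we get $g(t)=0$, contradicting the hypothesis $g(t)>0$ for all $t\geq0$. Hence $B>0$.

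With $B>0$ established, all hypotheses of Theorem~\ref{thm:main-threefold}(1) hold, and statement~(3) with $f\equiv c$ produces the required $u$. The compatibility condition $\int_M c\,\omega^3=\int_M\chi^2\wedge\omega$ is exactly the given normalization. The main point needing care is the sign argument for $B$ just given: the Hodge index inequality guarantees a nonnegative real root of $g$ exactly when $B<0$. Everything else is a direct citation.
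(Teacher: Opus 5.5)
Your proof is correct. For (2)$\Rightarrow$(1) it is the same as the paper's. For (1)$\Rightarrow$(2) your route is more direct than the paper's.

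The paper runs a continuity argument along the ray $[\chi+t\omega]$. It defines $I$ as the set of $t\geq0$ for which $[\chi+t\omega]$ has a smooth $2$-admissible representative, and notes that $I$ is nonempty, open and upward closed. It sets $t_0=\inf I$ and uses admissibility for $t>t_0$, together with the Hodge index bound $B(t)^2\geq A(t)V\geq CV$, to get $B(t_0)>0$. It then applies Theorem~\ref{thm:main-threefold} to $\chi+t_0\omega$ to conclude $t_0\in I$, and gets $t_0=0$ from openness. Only then does it invoke the Hessian equation theorem of \cite{Szekelyhidi} to produce the solution.

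You observe instead that the same Hodge index inequality already forces $B=\int_M\chi\wedge\omega^2>0$ at $t=0$. If $B<0$, then $g(t)=A+2tB+t^2V$ satisfies $g(-B/V)=(AV-B^2)/V\leq0$ at the positive point $t=-B/V$, contradicting the hypothesis. This lets you apply Theorem~\ref{thm:main-threefold} to $\chi$ itself with $f\equiv c$. The set $I$, the infimum and the openness step are then unnecessary. So is the separate appeal to \cite{Szekelyhidi}, since it is already inside the proof of Theorem~\ref{thm:main-threefold}.

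The sign information is the same in both arguments. The paper obtains positivity of the linear function $B(t)$ for large $t$ and pushes it down using $|B(t)|\geq\sqrt{A(t)V}>0$. You encode the same fact in the location of the roots of $g$. Your version is shorter and shows that statement~(1) reduces directly to the hypotheses of Theorem~\ref{thm:main-threefold} at $t=0$. The paper's formulation mirrors the shape of Murakami's conjecture, tracking admissibility along $[\chi+t\omega]$, but in this setting the continuity step adds no content.
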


\begin{proof}
If statement~(2) holds, then $\chi_u+t\omega\in\Gamma^2_\omega$ for any $t\geq0$, and the numerical inequalities in statement~(1) follow.

Assume statement~(1), and define
\[
I=\{t\geq0:[\chi+t\omega]\text{ contains a smooth representative in }\Gamma^2_\omega\}.
\]
For sufficiently large $t$, $\chi+t\omega\in\Gamma^2_\omega$, so $I$ is nonempty. The openness of $\Gamma^2_\omega$ shows that $I$ is open in $[0,+\infty)$, and the monotonicity of the cone $\Gamma^2_\omega$ gives
\[
s\in I,\quad t>s\quad\Longrightarrow\quad t\in I.
\]
Let $t_0=\inf I$. Then $(t_0,\infty)\subset I$. Put
\[
A(t)=\int_M(\chi+t\omega)^2\wedge\omega,
\qquad
B(t)=\int_M(\chi+t\omega)\wedge\omega^2,
\qquad
V=\int_M\omega^3.
\]
By assumption, $A(t)>0$ for all $t\geq0$, and $A(t)\to\infty$ as $t\to\infty$. Hence there exists $C>0$ such that $A(t)\geq C$ on $[0,\infty)$. For $t>t_0$, admissibility gives $B(t)>0$. Lemma~\ref{lem:hodge-index-inequality} yields
\[
B(t)^2\geq A(t)V\geq CV,
\]
so $B(t)\geq\sqrt{CV}$. Passing to the limit gives $B(t_0)>0$. Moreover,
\[
\int_Y(\chi+t_0\omega)\wedge\omega>0
\]
for any irreducible surface $Y$. Thus the numerical conditions in Theorem~\ref{thm:main-threefold} are satisfied for $\chi+t_0\omega$. Applying Theorem~\ref{thm:main-threefold}, we obtain a $2$-admissible representative in $[\chi+t_0\omega]$. Thus $t_0\in I$. If $t_0>0$, openness would produce points of $I$ below $t_0$, a contradiction. Hence $t_0=0$, and $[\chi]$ contains a $2$-admissible representative. The standard existence theorem for the complex Hessian equation~\cite{Szekelyhidi} then gives statement~(2).
\end{proof}

\section{The boundary case}

We first prove the boundary form of the Gauduchon criterion. The following elementary linear-algebra lemma removes the only nonuniform step in the choice of the exceptional coefficients.

\begin{lemma}\label{lem:matrix}
Let $A$ be a real symmetric $m\times m$ matrix such that
\[
x^{T}Ax<0
\]
for any nonzero vector $x\in\mathbb{R}_{\geq0}^m$. Then there exists $a\in\mathbb{R}_{>0}^m$ such that $Aa<0$ componentwise.
\end{lemma}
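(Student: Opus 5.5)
The plan is to derive Lemma~\ref{lem:matrix} from a theorem of the alternative for linear inequalities. Set $B=-A$, so $B$ is symmetric and strictly copositive: $x^TBx>0$ for every nonzero $x\in\mathbb{R}^m_{\geq0}$. We must find $a\in\mathbb{R}^m_{>0}$ with $Ba>0$ componentwise.

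The first step is to invoke Ville's theorem, a variant of Gordan's alternative. For a real $m\times m$ matrix $B$, exactly one of the following holds:
\begin{enumerate}[label=(\alph*)]
\item there is $x\in\mathbb{R}^m_{\geq0}$ with $Bx>0$ componentwise;
\item there is a nonzero $y\in\mathbb{R}^m_{\geq0}$ with $B^Ty\leq0$ componentwise.
\end{enumerate}
If this is not to be quoted, I will prove it by separation. Let $K=\{Bx-s: x\geq0,\ s\geq 0\}$, a closed convex cone. Alternative (a) fails exactly when $K$ does not meet the open positive orthant. In that case a separating functional $y\neq0$ satisfies $y^T(Bx-s)\leq0\leq y^Tp$ for all $x,s\geq0$ and all $p>0$. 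Taking $x=0$ and $p>0$ gives $y\geq0$, and letting $x$ range over $\mathbb{R}^m_{\geq 0}$ gives $B^Ty\leq0$.

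The second step rules out (b) using copositivity. Suppose $y\geq0$ is nonzero with $B^Ty=By\leq0$; here symmetry of $B$ is used. Since $y\geq0$, we get $y^TBy=\sum_i y_i(By)_i\leq0$. This contradicts strict copositivity. Hence (a) holds, and we obtain $x\geq0$ with $Bx>0$.

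The third step upgrades $x$ to a strictly positive vector. Since $Bx>0$ is an open condition, $B(x+\varepsilon\mathbf{1})=Bx+\varepsilon B\mathbf{1}$ is still componentwise positive for $\varepsilon>0$ small. Then $a=x+\varepsilon\mathbf{1}\in\mathbb{R}^m_{>0}$ satisfies $Aa=-Ba<0$.

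The only nontrivial ingredient is the alternative theorem in the first step, and its separation argument is standard. The key observation is that symmetry turns $B^Ty\leq0$ into $By\leq 0$, which makes the quadratic form nonpositive on a nonzero nonnegative vector.
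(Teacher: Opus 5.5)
Your proof is correct, but it takes a different route from the paper.

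\textbf{The paper's argument.} It is variational. It minimizes $x^TBx$ over the simplex $\Delta=\{x\ge0,\ \sum_i x_i=1\}$. At a minimizer $x_0$, the one-sided derivative along the segment from $x_0$ to $e_j$ is nonnegative, which gives $(Bx_0)_j\ge x_0^TBx_0=\mu>0$ for every $j$. It then perturbs $x_0$ by $\varepsilon\mathbf{1}$ exactly as you do.

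\textbf{Your argument.} You get a nonnegative $x$ with $Bx>0$ from a Gordan/Ville alternative proved by separation. Strict copositivity is used only to exclude the dual alternative. The separation step is sound, and so are the deduction of $y\ge0$ and $B^Ty\le0$ and the final perturbation.

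\textbf{What each route buys.}
\begin{itemize}
\item The paper's proof needs nothing beyond compactness and a first-order condition. It also yields the explicit bound $\min_j (Bx_0)_j\ge\mu$.
\item Your proof imports a separation theorem, but it is more general than you claim. Symmetry is not needed in your second step: for any square $B$ one has $y^TB^Ty=y^TBy$. So a nonzero $y\ge0$ with $B^Ty\le0$ already contradicts $y^TBy>0$.
\item By contrast, the paper's derivative computation genuinely uses symmetry. For nonsymmetric $B$ it would only control $(B+B^T)x_0$, not $Bx_0$.
\end{itemize}
In the application the distinction does not matter, since $A=(q(\{D_i\},\{D_j\}))$ is symmetric.
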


\begin{proof}
Set $B=-A$ and minimize $x^TBx$ on the simplex
\[
\Delta=\left\{x\in\mathbb{R}_{\geq0}^m:\sum_i x_i=1\right\}.
\]
The minimum $\mu$ is positive. Suppose that $x_0$ is a minimizer. Let $e_j\in\Delta$ be the $j$-th standard basis vector.  For each $j$, choose a path $$x_j(t)=(1-t)x_0+te_j,\quad t\in[0,1]$$ in $\Delta$ connecting $x_0$ and $e_j$. Define a smooth function $f_j:[0,1]\to\mathbb{R}$ by
\[
f_j(t)=x_j(t)^TBx_j(t).
\]
Then $t=0$  is a minimizer of $f_j(t)$, hence
\begin{equation}\label{eq:directional-derivative}
\left.\frac{d}{dt}\right|_{t=0}f_j(t)=2(e_j-x_0)^T Bx_0\geq0.
\end{equation}
Inequality~\eqref{eq:directional-derivative} gives
\[
(Bx_0)_j\geq x_0^TBx_0=\mu
\]
for every $j$. Thus $Bx_0$ is strictly positive. Replacing $x_0$ by $x_0+\varepsilon(1,\ldots,1)$ for sufficiently small $\varepsilon>0$ preserves this strict positivity and makes all coordinates positive. The resulting vector $a$ satisfies $Aa<0$.
\end{proof}

{\begin{theorem}
			\label{prop:boundary-detailed}
			Under the hypotheses of Theorem~\ref{prop:boundary-gauduchon}, there exist finitely many
			exceptional prime divisors $D_1,\ldots,D_m$ and positive numbers
			$a_1,\ldots,a_m$ such that, for
			\[
			\chi'=\chi-\sum_{i=1}^m a_i\theta_i,
			\]
			where $\theta_i$ is a smooth representative of $\{D_i\}$, we have
			\[
			[\chi'\wedge\omega^{n-2}]_A\in\mathcal G
			\]
			and
			\[
			\int_M(\chi')^2\wedge\omega^{n-2}>0 .
			\]
		\end{theorem}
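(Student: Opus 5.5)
The plan is to perturb $\chi$ only in the directions of the prime divisors on which the divisor inequality is an equality, and then apply Theorem~\ref{thm:gauduchon-class} to the perturbed form. Let $\mathcal{N}$ be the set of prime divisors $D$ with $q(\chi,\{D\})=0$. By hypothesis, $q(\chi,\{D\})>0$ for every prime divisor $D\notin\mathcal{N}$. If $\mathcal{N}=\emptyset$, Theorem~\ref{thm:gauduchon-class} applies directly and the statement holds with $m=0$, or with an arbitrarily small multiple of any exceptional divisor if one insists on $m\geq1$. So assume $\mathcal{N}\neq\emptyset$.

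\emph{Step 1: $\mathcal{N}$ is finite and exceptional.} Every class $\{D\}$ with $D\in\mathcal{N}$ lies in $[\chi]^\perp$ with respect to $q$. It is nonzero, since $q(\omega,\{D\})=\int_D\omega^{n-1}>0$. Because $q(\chi,\chi)>0$, the Hodge index theorem says $q$ is negative definite on $[\chi]^\perp$. Hence, for any finite subfamily $D_1,\dots,D_m\subset\mathcal{N}$, the Gram matrix $A=(q(\{D_i\},\{D_j\}))$ is negative definite.

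\begin{itemize}
\item Negative definiteness makes the classes $\{D_i\}$ linearly independent. So $m\leq h^{1,1}(M)$, and therefore $\mathcal{N}=\{D_1,\dots,D_m\}$ is finite.
\item The family is exceptional. Suppose $\gamma=\sum x_i\{D_i\}$ with $x\in\mathbb{R}^m_{\geq0}\setminus\{0\}$ were modified nef. Then Lemma~\ref{lem:modified-nef-square} gives $q(\gamma,\gamma)\geq0$. But $q(\gamma,\gamma)=x^TAx<0$, a contradiction.
\end{itemize}

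\emph{Step 2: choice of coefficients.} Since $x^TAx<0$ for every nonzero $x\geq0$, Lemma~\ref{lem:matrix} gives $b\in\mathbb{R}^m_{>0}$ with $Ab<0$ componentwise. Put $a=\varepsilon b$ for a small $\varepsilon>0$ to be fixed, and set $\chi'=\chi-\sum a_i\theta_i$. We then check the hypotheses of Theorem~\ref{thm:gauduchon-class} for $\chi'$.

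\begin{itemize}
\item For $D_j\in\mathcal{N}$: $q(\chi',\{D_j\})=-\varepsilon(Ab)_j>0$.
\item For a prime divisor $D\notin\mathcal{N}$: $q(\{D_i\},\{D\})=\int_{D_i\cap D}\omega^{n-2}\geq0$, because distinct prime divisors meet properly and their intersection is an effective cycle. Hence $q(\chi',\{D\})\geq q(\chi,\{D\})>0$. This lower bound needs no uniformity in $D$, and it is exactly the point where the boundary hypothesis would otherwise cause trouble.
\item Using $q(\chi,\{D_i\})=0$, we get $q(\chi',\chi')=q(\chi,\chi)+\varepsilon^2 b^TAb$, which is positive for $\varepsilon$ small.
\item $q(\chi',\omega)=q(\chi,\omega)-\varepsilon\sum b_iq(\{D_i\},\omega)$, which is also positive for $\varepsilon$ small.
\end{itemize}

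Theorem~\ref{thm:gauduchon-class} then yields $[\chi'\wedge\omega^{n-2}]_{\A}\in\G$, together with $\int_M(\chi')^2\wedge\omega^{n-2}>0$.

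\emph{Main obstacle.} I expect the hard step to be Step~1: finiteness of $\mathcal{N}$ and its exceptionality. Both follow from negative definiteness of $q$ on $[\chi]^\perp$, combined with Lemma~\ref{lem:modified-nef-square}. The only delicate point is making sure the classes $\{D\}$ are nonzero, which the positivity of $q(\omega,\{D\})$ handles. Once Step~1 is in place, the positivity of the cross terms $q(\{D_i\},\{D\})$ for $D\neq D_i$ makes the divisor checks in Step~2 immediate.
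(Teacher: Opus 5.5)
\textbf{Genuine gap: the sign in the check for $D\notin\mathcal N$.} The failure is in Step 2, at the check for $D\notin\mathcal N$. You correctly note that $q(\{D_i\},\{D\})\ge 0$. But since $\chi'=\chi-\varepsilon\sum_i b_i\theta_i$, this gives
\[
q(\chi',\{D\})=q(\chi,\{D\})-\varepsilon\sum_{i=1}^m b_i\,q(\{D_i\},\{D\})\le q(\chi,\{D\}),
\]
so the inequality you wrote goes the wrong way. Subtracting the $D_i$ raises the pairing with the $D_i$ themselves (that is what $Ab<0$ buys), but it lowers the pairing with every other divisor that meets them. Positivity therefore needs $\varepsilon\sum_i b_i\,q(\{D_i\},\{D\})<q(\chi,\{D\})$ for all of the possibly infinitely many $D\notin\mathcal N$. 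The hypothesis only gives $q(\chi,\{D\})>0$, and this may tend to $0$ along a sequence of divisors, so no fixed $\varepsilon$ is guaranteed to work. This is exactly the nonuniformity the boundary hypothesis creates. The real difficulty lies here, not in Step 1, and your remark that this step ``needs no uniformity'' is backwards.

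\textbf{Missing idea: a uniform margin built in before perturbing.} This is what the paper does.
\begin{enumerate}
\item Choose $\delta>0$ with $q(\chi-\delta\omega,\chi-\delta\omega)>0$ and $q(\chi-\delta\omega,\omega)>0$. Replace $\mathcal N$ by $S=\{D:\ q(\chi-\delta\omega,\{D\})\le 0\}\supseteq\mathcal N$.
\item Your Hodge-index argument survives with a small change. For a nonzero class $F$ in the cone spanned by finitely many members of $S$, take $\alpha=\chi-\delta\omega+t\omega$ with $t=-q(\chi-\delta\omega,F)/q(\omega,F)\ge 0$. Then $q(\alpha,F)=0<q(\alpha,\alpha)$, so $q(F,F)<0$. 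Hence $S$ is exceptional, and finite by Boucksom.
\item Every $D\notin S$ now satisfies $q(\chi,\{D\})>\delta\int_D\omega^{n-1}$. Put $E=\sum_i b_i\{D_i\}$ and take $s>0$ so small that $\delta[\omega]-sE$ is K\"ahler. Then $q(\chi-sE,\{D\})=q(\chi-\delta\omega,\{D\})+q(\delta\omega-sE,\{D\})>0$.
\end{enumerate}
The paper applies Lemma~\ref{lem:matrix} to all of $S$, which needs only $q(\chi,\{D_j\})\ge 0$ there. Alternatively, you could keep your $E$ built from $\mathcal N$ and absorb the finitely many $D\in S\setminus\mathcal N$ by taking $\varepsilon$ small.

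\textbf{Minor point in Step 1.} Calling the Gram matrix negative definite already presupposes that the $\{D_i\}$ are linearly independent, so deducing independence from it is circular. What you actually have is $x^TAx<0$ for nonzero $x\in\mathbb{R}^m_{\ge 0}$. That suffices for exceptionality and for Lemma~\ref{lem:matrix}. Finiteness then follows from Boucksom's linear independence of exceptional families, as in the paper.
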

		
		\begin{proof}
			Recall 
			\[
			q(\alpha,\beta)=\int_M\alpha\wedge\beta\wedge\omega^{n-2},\qquad \alpha,\beta\in H^{1,1}(M,\mathbb{R}).
			\]
			By the hypotheses of Theorem~\ref{prop:boundary-gauduchon}, we can choose $\delta>0$ sufficiently small such that
			\[
			q(\chi-\delta\omega,\chi-\delta\omega)>0,
			\qquad
			q(\chi-\delta\omega,\omega)>0 .
			\]
			Set
			\[
			\widetilde\chi=\chi-\delta\omega
			\]
			and define
			\[
			S=\{D\subset M:\ D\text{ is a prime divisor and }
			q(\widetilde\chi,\{D\})\le0\}.
			\]
			
			We first show that every finite collection of divisors in $S$ is an
			exceptional family. Let
			\[
			E=\sum_{i=1}^r x_i\{D_i\},
			\qquad x_i\ge0,
			\]
			be a nonzero element in the convex cone generated by the classes of a finite subset of $S$.
			Since $E$ is represented by a nonzero effective divisor, at least one $x_i>0$, hence
			\[
			q(\omega,E)=\int_E\omega^{n-1}>0 .
			\]
			By the definition of $S$,
			\[
			q(\widetilde\chi,E)\le0 .
			\]
			Define
			\[
			t=-\frac{q(\widetilde\chi,E)}{q(\omega,E)}\ge0
			\]
			and set
			$
			\alpha=\widetilde\chi+t\omega .
			$
			Then
			\[
			q(\alpha,E)=0 .
			\]
			Furthermore,
			\[
			\begin{aligned}
				q(\alpha,\alpha)
				=
				q(\widetilde\chi,\widetilde\chi)
				+2tq(\widetilde\chi,\omega)
				+t^2q(\omega,\omega)
				>0 .
			\end{aligned}
			\]
			Hence, by the Hodge index theorem, the intersection form $q$ is
			negative definite on $\alpha^\perp$. Since $E\neq0$ and
			$E\in\alpha^\perp$, we obtain
			\[
			q(E,E)<0 .
			\]
			
			Thus by Lemma~\ref{lem:modified-nef-square}, no nonzero element in the convex cone generated by the classes of a finite subset of $S$ can belong to the modified nef cone. Hence every finite
			subfamily of $S$ is an exceptional family in the sense of Boucksom
			\cite{Boucksom}. Since the classes of an exceptional family are
			linearly independent, the cardinality of every such family is bounded
			by the Picard number. Consequently, $S$ itself is finite. Write
			\[
			S=\{D_1,\ldots,D_m\}.
			\]
			
			If $m=0$, then
			\[
			q(\chi,\{D\})>q(\widetilde\chi,\{D\})>0
			\]
			for any prime divisor $D$, and Theorem~\ref{thm:gauduchon-class} immediately gives the
			conclusion. Hence we assume $m\ge1$.
			
			Let
			\[
			A=(q(\{D_i\},\{D_j\}))_{1\le i,j\le m}.
			\]
			The argument above shows that
			\(
			x^TAx<0
			\)
			for any nonzero vector
			\(
			x\in\mathbb R_{\ge0}^m.
			\)
			By Lemma~\ref{lem:matrix}, there exists
			\(
			b=(b_1,\ldots,b_m)\in\mathbb R_{>0}^m
			\)
			such that
			\(
			Ab<0
			\)
			componentwise. Let
			\[
			E=\sum_{i=1}^m b_i\{D_i\}.
			\]
			Then
			\[
			q(E,\{D_j\})<0,
			\qquad 1\le j\le m .
			\]
			
			Since
			\(
			q(\chi,\{D_j\})\ge0
			\)
			by the assumption of Theorem~\ref{prop:boundary-gauduchon}, we have
			\[
			q(\chi-sE,\{D_j\})>0,\qquad 1\le j\le m
			\]
			for any sufficiently small $s>0$.
			
			Choose $s>0$ sufficiently small such that
			\(
			\delta[\omega]-sE
			\)
			is a K\"ahler class and
			\[
			q(\chi-sE,\chi-sE)>0,
			\qquad
			q(\chi-sE,\omega)>0 .
			\]
			For any prime divisor $D\notin S$, we have
			\[
			q(\widetilde\chi,\{D\})>0.
			\]
			Hence
			\[
			\begin{aligned}
				q(\chi-sE,\{D\})
				&=
				q(\widetilde\chi,\{D\})
				+
				q(\delta\omega-sE,\{D\})\\
				&>0 ,
			\end{aligned}
			\]
			because $\delta[\omega]-sE$ is a K\"ahler class and $D$ is an effective
			divisor.
			
			Therefore,
			\[
			q(\chi-sE,\{D\})>0
			\]
			for any prime divisor $D$.

 Set	$a_i=sb_i,\;1\leq i\leq m.$ For smooth representatives $\theta_i$ of $\{D_i\}$,
			we have
			$$
		[\chi-\sum_{i=1}^m a_i\theta_i]=[\chi]-sE.	
			$$

           Set $\chi'=\chi-\sum_{i=1}^m a_i\theta_i.$  Then
			\[
			q(\chi',\chi')=q(\chi-sE,\chi-sE)>0,
			\]
and \[q(\chi',\{D\})=q(\chi-sE,\{D\})>0\]
			for any prime divisor $D$.
            
 Applying Theorem~\ref{thm:gauduchon-class} to
			\(
			\chi'
			\)
			gives
			\[
			[\chi'\wedge\omega^{n-2}]_A\in\mathcal G .
			\]
So the conclusion follows.
\end{proof}}

\begin{proof}[Proof of Theorem~\ref{prop:boundary-gauduchon}]
This is Theorem~\ref{prop:boundary-detailed}, expressed cohomologically.
\end{proof}

We now give an application of the boundary criterion in dimension three. Following~\cite{PangSunWangZhou}, a smooth real $(1,1)$-form $\chi$ is called $(\omega,2)$-big if there exist a $(\chi,\omega,2)$-subharmonic quasi-psh function $\rho$ and a smooth form $\gamma\in\Gamma^2_\omega$ such that
\[
\chi+\ii\partial\bar\partial\rho\geq\gamma
\]
as currents.

\begin{proposition}\label{thm:big-boundary}
Let $(M,\omega)$ be a compact K\"ahler threefold, and let $\chi$ be a real closed $(1,1)$-form. Suppose  the hypotheses in the first assertion of Corollary~\ref{thm:boundary-hessian} hold, namely, \[
\int_M\chi^2\wedge\omega>0,\qquad \int_M\chi\wedge\omega^2>0,
\]
and that, for any irreducible analytic surface $Y\subset M$,
\[
\int_Y\chi\wedge\omega\geq0.
\] Then  $\chi$ is $(\omega,2)$-big.
\end{proposition}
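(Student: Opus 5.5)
Apply Theorem~\ref{prop:boundary-detailed} with $n=3$. It produces finitely many exceptional prime divisors $D_1,\ldots,D_m$, positive numbers $a_i$, and smooth representatives $\theta_i$ of $\{D_i\}$ such that
\[
\chi'=\chi-\sum_{i=1}^m a_i\theta_i
\]
satisfies $[\chi'\wedge\omega]_{\A}\in\G$ and $\int_M(\chi')^2\wedge\omega>0$.

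Membership in the Gauduchon cone means there is a smooth $(1,2)$-form $\psi$ with $\chi'\wedge\omega+\partial\psi+\bar\partial\bar\psi>0$. This is statement~(2) of Theorem~\ref{thm:main-threefold} for the closed form $\chi'$, and the normalization~\eqref{eq:normalization} holds. Hence statement~(3) applies: taking $f$ to be the constant $\int_M(\chi')^2\wedge\omega/\int_M\omega^3$, we obtain a smooth function $v$ with $\gamma:=\chi'+\ii\partial\bar\partial v\in\Gamma^2_\omega$.

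Next we choose the $\theta_i$ concretely. Fix a hermitian metric $h_i$ on $\mathcal O(D_i)$ and a defining section $s_i$. By Poincar\'e--Lelong,
\[
[D_i]=\theta_i+\ii\partial\bar\partial\log|s_i|^2_{h_i},\qquad \theta_i=\text{curvature form of } h_i.
\]
Set
\[
\rho=v+\sum_i a_i\log|s_i|^2_{h_i}.
\]
This is quasi-psh, since it is smooth plus a positive combination of $\log|s_i|^2$. As currents,
\[
\chi+\ii\partial\bar\partial\rho=\chi'+\ii\partial\bar\partial v+\sum_i a_i[D_i]=\gamma+\sum_i a_i[D_i]\geq\gamma .
\]

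It remains to check that $\rho$ is $(\chi,\omega,2)$-subharmonic in the sense of~\cite{PangSunWangZhou}. This is the main obstacle, because we must verify that the definition there accepts functions with logarithmic poles. Off $\bigcup D_i$, the form $\chi+\ii\partial\bar\partial\rho$ equals $\gamma\in\Gamma^2_\omega$, so $\rho$ is smooth and $2$-subharmonic there. Near the divisors, each local potential is a smooth function with $\ii\partial\bar\partial$ in $\Gamma^2_\omega$ plus $a_i\log|g_i|^2$. The function $\log|g_i|^2$ is psh, hence $2$-subharmonic, and the cone of $(\omega,2)$-subharmonic functions is a convex cone closed under decreasing limits.

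We would therefore approximate $\rho$ by the smooth functions
\[
\rho_\varepsilon=v+\sum_i a_i\log\bigl(|s_i|^2_{h_i}+\varepsilon\bigr).
\]
For these,
\[
\ii\partial\bar\partial\log\bigl(|s_i|^2+\varepsilon\bigr)\geq -C\,\omega,
\]
since $\log(|s_i|^2+\varepsilon)$ is plurisubharmonic up to the bounded curvature term, and more precisely $\ii\partial\bar\partial\log(|s_i|^2+\varepsilon)\ge -\frac{|s_i|^2}{|s_i|^2+\varepsilon}\theta_i$. Hence
\[
\chi+\ii\partial\bar\partial\rho_\varepsilon\geq \chi'+\ii\partial\bar\partial v+\sum_i a_i\Bigl(1-\frac{|s_i|^2}{|s_i|^2+\varepsilon}\Bigr)\theta_i .
\]
The error term $\sum_i a_i\frac{\varepsilon}{|s_i|^2+\varepsilon}\theta_i$ is not small near the divisors. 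To absorb it, we would replace the approximation by $\gamma+\sum_i a_i(\text{psh part})$, using the exact Poincar\'e--Lelong identity on $\log(|s_i|^2+\varepsilon)$ with the positive term $\frac{\varepsilon\,\ii\partial s_i\wedge\overline{\partial s_i}}{(|s_i|^2+\varepsilon)^2}$. This gives
\[
\chi+\ii\partial\bar\partial\rho_\varepsilon\geq\gamma-\sum_i a_i\frac{\varepsilon}{|s_i|^2+\varepsilon}\theta_i .
\]
The error is bounded by $C\omega$ uniformly. If this causes trouble, we instead take $\rho$ itself as the function: it is a decreasing limit of the $\rho_\varepsilon$, and it is locally $2$-subharmonic by the local decomposition above. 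We would then cite the definition in~\cite{PangSunWangZhou}, whose $(\chi,\omega,2)$-subharmonicity is local and stable under decreasing limits.
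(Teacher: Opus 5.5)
Your proposal is correct and follows the paper's proof essentially step for step. You apply Theorem~\ref{prop:boundary-detailed}, then (2)$\Rightarrow$(3) of Theorem~\ref{thm:main-threefold} to get $\gamma=\chi'+\ii\partial\bar\partial v\in\Gamma^2_\omega$, then set $\rho=v+\sum_i a_i\log|s_i|^2_{h_i}$ and obtain $\chi+\ii\partial\bar\partial\rho=\gamma+\sum_i a_i[D_i]\geq\gamma$ from Lelong--Poincar\'e, exactly as the paper does.

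The $\rho_\varepsilon$ detour is unnecessary. The error term is in fact $+\sum_i a_i\frac{\varepsilon}{|s_i|^2+\varepsilon}\theta_i$, of indefinite sign, so the $\rho_\varepsilon$ need not be $(\chi,\omega,2)$-subharmonic and decreasing limits play no role. The paper concludes as your fallback does, by monotonicity of the $2$-subharmonic cone, since $\chi+\ii\partial\bar\partial\rho-\gamma=\sum_i a_i[D_i]$ is a positive current.
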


\begin{proof}
By Theorem~\ref{prop:boundary-detailed}, there exist prime divisors $D_i$, positive numbers $a_i$, and smooth curvature forms $\theta_i$ representing the classes $\{D_i\}$ such that, for
\[
\beta=\chi-\sum_i a_i\theta_i,
\]
the class $[\beta\wedge\omega]_{\A}$ is Gauduchon and
\[
\int_M\beta^2\wedge\omega>0.
\]
Then statement~(2) of Theorem~\ref{thm:main-threefold} is satisfied for $\beta$. Applying Theorem~\ref{thm:main-threefold}, we  obtain a smooth function $v$ such that
\[
\gamma=\beta+\ii\partial\bar\partial v\in\Gamma^2_\omega.
\]
Let $h_i$ be Hermitian metrics on $\mathcal{O}(D_i)$ with curvature $\theta_i$, and let $s_i$ be the canonical sections vanishing on $D_i$. By the Lelong--Poincar\'e formula~\cite{DemaillyBook},
\[
\ii\partial\bar\partial\log|s_i|_{h_i}^2=[D_i]-\theta_i
\]
as currents. Define
\[
\rho=v+\sum_i a_i\log|s_i|_{h_i}^2.
\]
Then, in the sense of currents,
\[
\chi+\ii\partial\bar\partial\rho
=\gamma+\sum_i a_i[D_i]\geq\gamma.
\]
By the monotonicity of the $2$-subharmonic cone, $\rho$ is $(\chi,\omega,2)$-subharmonic, and $\chi$ is $(\omega,2)$-big.
\end{proof}

\begin{proof}[Proof of Corollary \ref{thm:boundary-hessian}]
The first assertion is Proposition~\ref{thm:big-boundary}. Suppose, in addition, that $\chi\in\overline{\Gamma}^2_{\omega}$ and
\[
\int_M\chi^2\wedge\omega>0.
\]
The assumption \(\chi\in\overline\Gamma_\omega^2\) implies that $\chi\wedge\omega\geq0$ as a $(2,2)$-form, so
\[
\int_Y\chi\wedge\omega\geq0
\]
for any irreducible surface $Y$. It also gives $\int_M\chi\wedge\omega^2\geq0$, while Lemma~\ref{lem:hodge-index-inequality} and $\int_M\chi^2\wedge\omega>0$ imply that this integral is nonzero. Hence it is strictly positive. The first assertion therefore shows that $\chi$ is $(\omega,2)$-big.   The existence and uniqueness theorem on degenerate complex Hessian equations for $(\omega,2)$-big and 2-semipositive forms~\cite[Theorem~11.1]{PangSunWangZhou} yields a unique function
$
u\in\operatorname{SH}_2(M,\chi,\omega)\cap L^\infty(M)$
satisfying
\[
(\chi+\ii\partial\bar\partial u)^2\wedge\omega=e^{\lambda u}f\omega^3
\]
in the potential sense, and
\[
\operatorname{osc}_M u \leq C,
\]
where $C$ is a constant depending on
$\chi,\omega,\lambda,p,M,\|f\|_p$.
\end{proof}

\end{document}